\newtheorem{theorem}{Theorem}
\newtheorem{lemma}[theorem]{Lemma}
\newtheorem{proposition}[theorem]{Proposition}
\newtheorem{corollary}[theorem]{Corollary}
\newtheorem{example}[theorem]{Example}
\theoremstyle{remark}
\newcommand{\C}{{\mathbb C}}
\newcommand{\cn}{\C^n}
\newcommand{\N}{{\mathbb N}}
\newcommand{\re}{{\rm Re}\,}
\begin{document}

\title[Monomial-type Toeplitz operators on some weakly pseudoconvex domains]
{Commuting and semi-commuting monomial-type Toeplitz operators on some weakly pseudoconvex domains}

\author[C. Jiang, X.T. Dong and Z.H. Zhou]{Cao Jiang, Xing-Tang Dong and Ze-Hua Zhou$^*$}

\address{School of Mathematics, Tianjin University, Tianjin 300354, P.R. China.}
\email{jiangcc96@163.com}

\address{School of Mathematics, Tianjin University, Tianjin 300354, P.R. China.}
\email{dongxingtang@163.com}

\address{School of Mathematics, Tianjin University, Tianjin 300354, P.R. China.}
\email{zehuazhoumath@aliyun.com; zhzhou@tju.edu.cn}

\subjclass[2010]{Primary 47B35; Secondary 32A36.}

\keywords{Toeplitz operators, Bergman space, monomial-type symbol,
 weakly pseudoconvex domains.}
\date{}

\thanks{\noindent $^*$Corresponding author.\\
This work was supported in part by the National Natural Science Foundation of
China (Grant Nos. 11201331; 11371276; 11771323).}

\begin{abstract}
In this paper, we completely characterize the finite rank commutator and semi-commutator of two
monomial-type Toeplitz operators on the Bergman space of certain weakly pseudoconvex domains.
Somewhat surprisingly, there are not only plenty of commuting monomial-type Toeplitz operators but also non-trivial semi-commuting monomial-type Toeplitz operators. Our results are new even for the unit ball.
\end{abstract} \maketitle

\section{Introduction}
The Toeplitz operators on certain pseudoconvex domains in $\cn$ have been the object of much study. See \cite{ARS,CDR,JK,QS} for example.
In this paper, we shall consider Toeplitz operators on the weakly pseudoconvex domains
$$
\Omega_m^n=\left\{(z_1,\cdots,z_n)\in\mathbb{C}^n:\sum_{i=1}^n{|z_i|^{2m_i}}<1\right\},
$$
where $m=(m_1,\cdots,m_n)$ is an $n$-tuple of positive integers. We shall suppose $n>1$ to avoid trivialities throughout the paper.
Then for each $z=(z_1,\cdots,z_n)\in\cn$, we denote
$$r=\sqrt{|z_1|^{2m_1}+\cdots+|z_n|^{2m_n}}$$ and
$$\zeta=(\zeta_1,\cdots,\zeta_n)
=\left(\frac{z_1}{r^{\frac{1}{m_1}}},\cdots,\frac{z_n}{r^{\frac{1}{m_n}}}\right)\in\mathbb{S}_m^n,$$ where
$\mathbb{S}_m^n$ is the boundary of $\Omega_m^n$. Note that these expressions define a set of
coordinates $(r,\zeta)$ for every $z\in\mathbb{C}$, these coordinates are called $m$-polar coordinates (see \cite{QS}). 
If $m=(1,\cdots,1)$, then $\Omega_m^n=\mathbb{B}^n$ is the unit ball centered at the origin.

Let $L^2(\Omega_m^n)$ be the collection of all square integrable functions with respect to the usual Lebesgue measure
$dV$ on $\Omega_m^n$. The Bergman space $A^2(\Omega_m^n)$ is the closed subspace of $L^2(\Omega_m^n)$ consisting of
holomorphic functions in $\Omega_m^n$.
Denote by $P: L^2(\Omega_m^n)\rightarrow A^2(\Omega_m^n)$ the orthogonal projection.
Given a symbol $u\in L^\infty(\Omega_m^n)$, the Toeplitz operator $T_u$ induced by $u$ is the bounded operator defined by
$$T_u(f)=P(uf):A^2(\Omega_m^n)\rightarrow A^2(\Omega_m^n).$$
For two Toeplitz operators $T_{f_1}$ and $T_{f_2}$ on $A^2(\Omega_m^n)$, the commutator
and semi-commutator are defined by
$[T_{f_1},T_{f_2}]=T_{f_1}T_{f_2}-T_{f_2}T_{f_1}$
and $(T_{f_1},T_{f_2}]=T_{f_1}T_{f_2}-T_{f_1f_2}$, respectively.

The problem of characterizing
when two Toeplitz operators commute or semi-commute on the Bergman space over various domains has been a long-term research topic.
For example, on the setting of the Bergman space over the unit disk,
the Brown-Halmos type theorems were obtained in \cite{AhC} and \cite{AxC} for Toeplitz operators with harmonic symbols, and many other types of (semi-)commuting Toepltz operators with quasihomogeneous symbols were found in \cite{CR, DZ6, LSZ, LZ1}.
However, the general (semi-)commuting problem remains open on the unit disk, and it becomes even more delicate and more
challenging on higher-dimensional balls, see \cite{DZ1, Le, Le1, V, Z1, ZD} for example.

Just recently, the second author and Zhu in \cite{DZhu} completely characterized when the commutators
and semi-commutators of two monomial Toeplitz operators on the Bergman space of the unit
ball $A^2(\mathbb{B}^n)$ have finite rank. In this paper,
we take the weakly pseudoconvex domain $\Omega_m^n$ as our domain and consider more general symbols, namely, the monomial-type symbols.
Recall that the monomial-type symbol is the function $\varphi:\Omega_m^n\rightarrow \mathbb{C}$ given by
$$\varphi(z)=r^l\zeta^p\overline{\zeta}^q$$
for $p,q\in\mathbb{N}^n$, $l\in \mathbb{R}_+$ (Here $\mathbb{R}_+$ denotes the set of all nonnegative real numbers). In this case, the corresponding
Toeplitz operator $T_{\varphi}$ is called a monomial-type Toeplitz operator.

To state our main results, we need some notations.
For $\alpha=(\alpha_1,\cdots,\alpha_n)\in\mathbb{N}^n$,
we write
$$|\widehat{\alpha}|=\frac{\alpha_1}{m_1}+\cdots+\frac{\alpha_n}{m_n}.$$ If $m_i=1$ for all $i\in\{1,\cdots,n\}$, then we will use the usual notation $|\alpha|$ to take instead of $|\widehat{\alpha}|$.
A tuple $(x_1,x_2,y_1,y_2)\in\mathbb{R}_+^4$ is called to satisfy Condition (I) if at least one of the following conditions holds.
\begin{enumerate}
  \item[(i)] $x_1=x_2=0$,
  \item[(ii)] $y_1=y_2=0$,
  \item[(iii)] $x_1=y_1=0$,
  \item[(iv)] $x_2=y_2=0$,
  \item[(v)] $x_1=x_2$ and $y_1=y_2$,
  \item[(vi)] $x_1=y_1$ and $x_2=y_2$.
\end{enumerate}
For $p, q, s, t\in\mathbb{N}^n$, Theorem A of \cite{DZhu} shows that the operators $T_{z^p\overline z^q}$ and $T_{z^s\overline z^t}$ commute on $A^2(\mathbb{B}^n)$ if and only if one of the following five conditions holds
\begin{enumerate}
\item[(c1)] One of the two operators is the identity operator.
\item[(c2)] Both operators have analytic symbols.
\item[(c3)] Both operators have conjugate analytic symbols.
\item[(c4)] $|p|=|q|$, $|s|=|t|$, and $(p_i, q_i, s_i, t_i)$ satisfies Condition (I) for all $i\in\{1,2,\cdots,n\}$.
\item[(c5)] $|p|=|s|$, $|q|=|t|$, and $(p_i, q_i, s_i, t_i)$ satisfies Condition (I) for all $i\in\{1,2,\cdots,n\}$.
\end{enumerate}
Clearly, conditions (c4) and (c5) produce lots of non-trivial commuting monomial Toeplitz
operators on $A^2(\mathbb{B}^n)$, see \cite[Example 6]{DZhu}.
On the setting of the Bergman space over the weakly pseudoconvex domain $\Omega_m^n$,
Barranco and Nungaray \cite{QS} studied the commutativity of two Toeplitz operators with special k-quasi-homogeneous symbols. Here $k=(k_1,\cdots,k_\tau)$ is a partition of $n$, and then each $z\in\cn$ can be decomposed into $\tau$ pieces.
Inspired by \cite[Theorem A]{DZhu}, 
we obtain the following general result which gives two non-trivial sufficient conditions for two k-quasi-homogeneous Toeplitz operators commuting on $A^2(\Omega_m^n)$.
\begin{proposition} Let $p, q, s, t\in\mathbb{N}^n$, and let $\varphi$ and $\psi$ be bounded k-quasi-radial functions on $\Omega_m^n$. Suppose one of the following statements holds:
\begin{enumerate}
  \item[(i)] $(p_i, q_i, s_i, t_i)$ satisfies Condition (I) for each $i\in\{1,2,\cdots,n\}$, $|p_{(j)}|=|q_{(j)}|$ and $|s_{(j)}|=|t_{(j)}|$ for each $j\in\{1,2,\cdots,\tau\}$.
  \item[(ii)] $(p_i, q_i, s_i, t_i)$ satisfies Condition (I) for each $i\in\{1,2,\cdots,n\}$, $|p_{(j)}|=|s_{(j)}|$ and $|q_{(j)}|=|t_{(j)}|$ for each $j\in\{1,2,\cdots,\tau\}$, and $\varphi(r_1,\cdots,r_\tau)=\psi(r_1,\cdots,r_\tau)$.
\end{enumerate}
Then the operators $T_{\zeta^{p}\overline{\zeta}^{q}\varphi}$ and $T_{\zeta^{s}\overline{\zeta}^{t}\psi}$ commute on $A^2(\Omega_m^n)$.
\end{proposition}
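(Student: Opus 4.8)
The plan is to diagonalize both operators as weighted shifts on the standard monomial basis of $A^2(\Omega_m^n)$ and to reduce the commuting relation to an identity between the two weight sequences.

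First I would recall that $\{z^\alpha:\alpha\in\mathbb{N}^n\}$ is an orthogonal basis of $A^2(\Omega_m^n)$, and that in $m$-polar coordinates — refined along the partition $k$ by introducing for each block $I_j$ a partial radius $r_j=\bigl(\sum_{i\in I_j}|z_i|^{2m_i}\bigr)^{1/2}$ and angular variables on the corresponding sub-sphere $\mathbb{S}^{k_j}_{m_{(j)}}$ — the Lebesgue measure $dV$ factors into a radial part supported on the simplex $\{(r_1,\dots,r_\tau):r_j>0,\ \sum_j r_j^2<1\}$ and an angular part which is a product of torus-invariant measures on the sub-spheres. Since $\zeta_i$ differs from the block angular variable $\eta_i$ only by a positive $k$-quasi-radial factor, the symbol $\zeta^p\overline\zeta^q\varphi$ is, after absorbing those factors into a new bounded $k$-quasi-radial function, of the form $\eta^p\overline\eta^q$ times a $k$-quasi-radial factor; together with the torus invariance of every $k$-quasi-radial function this forces $\langle\zeta^p\overline\zeta^q\varphi\,z^\alpha,z^\beta\rangle=0$ unless $\beta=\alpha+p-q$. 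Hence
$$T_{\zeta^p\overline\zeta^q\varphi}\,z^\alpha=\gamma^{\varphi}_{p,q}(\alpha)\,z^{\alpha+p-q},\qquad \gamma^{\varphi}_{p,q}(\alpha)=\frac{\langle\zeta^p\overline\zeta^q\varphi\,z^\alpha,z^{\alpha+p-q}\rangle}{\|z^{\alpha+p-q}\|^2},$$
with the convention that this is $0$ when $\alpha+p-q\notin\mathbb{N}^n$. Carrying out the (routine but somewhat lengthy) integration, $\gamma^{\varphi}_{p,q}(\alpha)$ splits as a product of a coordinatewise factor, built from ratios of values of the Gamma function at the points $(\alpha_i+p_i+1)/m_i$, $(\alpha_i+1)/m_i$ and $(\alpha_i+p_i-q_i+1)/m_i$, times a total-degree factor depending on $\alpha$ only through $|\widehat\alpha|$, times a radial factor given by an integral over the simplex against $\varphi$ that depends on $\alpha$ only through the $\tau$-tuple of block degrees $\bigl(|\widehat{\alpha_{(1)}}|,\dots,|\widehat{\alpha_{(\tau)}}|\bigr)$. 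The precise formulas here are of the type already appearing in \cite{QS}, and I would either quote them or record them in a preliminary lemma.

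Next I would compute the two compositions. Both $T_{\zeta^p\overline\zeta^q\varphi}T_{\zeta^s\overline\zeta^t\psi}z^\alpha$ and $T_{\zeta^s\overline\zeta^t\psi}T_{\zeta^p\overline\zeta^q\varphi}z^\alpha$ are scalar multiples of $z^{\alpha+p+s-q-t}$, so the commutator vanishes if and only if
$$\gamma^{\psi}_{s,t}(\alpha)\,\gamma^{\varphi}_{p,q}(\alpha+s-t)=\gamma^{\varphi}_{p,q}(\alpha)\,\gamma^{\psi}_{s,t}(\alpha+p-q)\qquad\text{for all }\alpha\in\mathbb{N}^n.$$
I would then verify this identity using the factorization above. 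Condition (I) in a coordinate $i$ is exactly what makes the coordinatewise factors match: in its cases (i)--(iv) one of the two symbols is trivial or purely (co)analytic in $z_i$, so the $i$-th contribution is common to both sides; in case (v) the balanced exponents make the $i$-th contribution inert under the shifts in the required way; and in case (vi) the two symbols carry the same data in the variable $z_i$, so the corresponding shifts commute coordinatewise. Under hypothesis (i) of the proposition, the relations $|p_{(j)}|=|q_{(j)}|$ and $|s_{(j)}|=|t_{(j)}|$ give $|\widehat{(\alpha+p-q)_{(j)}}|=|\widehat{\alpha_{(j)}}|$ and similarly for $s,t$, so the block degrees — hence the radial and total-degree factors — are unchanged by either shift and agree on the two sides. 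Under hypothesis (ii), the relations $|p_{(j)}|=|s_{(j)}|$, $|q_{(j)}|=|t_{(j)}|$ together with $\varphi=\psi$ force the two radial integrals to coincide, now via the symmetry interchanging $(p,q,\varphi)$ and $(s,t,\psi)$ rather than via shift-invariance.

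The step I expect to be the main obstacle is the bookkeeping in this last verification: one must handle the six alternatives of Condition (I) uniformly across all $n$ coordinates simultaneously, and — crucially — must deal with the boundary phenomenon that an intermediate exponent such as $\alpha+s-t$ or $\alpha+p-q$ can fall outside $\mathbb{N}^n$ even though the final exponent $\alpha+p+s-q-t$ does not. It is precisely here that one uses that Condition (I) holds in \emph{every} coordinate, not merely that total degrees are balanced: in such a case Condition (I) forces the relevant scalar to vanish in the offending coordinate, so both sides of the displayed identity are $0$ and the equality persists. A lesser technical point is simply pinning down the exact form of $dV$ in the block-refined $m$-polar coordinates, and hence the Gamma-function expressions for $\|z^\alpha\|^2$ and for $\gamma^{\varphi}_{p,q}(\alpha)$.
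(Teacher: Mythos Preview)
Your proposal is correct and matches the paper's intended argument: the paper does not actually write out a proof of this proposition, stating only that it is ``a direct calculation using \cite[Lemma 3.6]{QS},'' and what you outline is precisely that calculation---diagonalize both operators as weighted shifts on the monomial basis via the formula in \cite{QS}, factor the weights into coordinatewise and block-radial pieces, and verify the commutation identity case by case under Condition~(I) and the block-degree hypotheses. One small inaccuracy: for a general $k$-quasi-radial $\varphi$ the non-coordinatewise part of $\gamma^{\varphi}_{p,q}(\alpha)$ depends on the full $\tau$-tuple of block degrees rather than splitting further into a global ``total-degree'' factor and a separate radial factor, but this does not affect the argument since both hypotheses (i) and (ii) control the block degrees, and it will sort itself out once you invoke the explicit formula from \cite[Lemma 3.6]{QS}.
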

Since the proof of this result is a direct calculation using \cite[Lemma 3.6]{QS}, we leave it to the interested reader.
Unfortunately, so far no other non-trivial case was obtained for two k-quasi-homogeneous Toeplitz
operators commuting even on $A^2(\mathbb{B}^n)$.
In this paper, we will give many new commuting Toeplitz
operators on $A^2(\Omega_m^n)$ induced by the monomial-type symbols.


Our first main result not only shows that there is no nonzero finite rank commutator of two monomial-type
Toeplitz operators on $A^2(\Omega_m^n)$,
but also gives the sufficient and necessary condition for such two operators to be commutative.
\begin{theorem}\label{thm1}
Let $l,k\in\mathbb{R}_+$, $p,q,s,t\in\mathbb{N}^n$. Then the following statements are equivalent.
\begin{enumerate}
\item[(a)] The commutator $[T_{r^l\zeta^p\overline{\zeta}^q},T_{r^k\zeta^s\overline{\zeta}^t}]$ on
 $A^2(\Omega_m^n)$ has finite rank.
\item[(b)] The operators $T_{r^l\zeta^p\overline{\zeta}^q}$ and $T_{r^k\zeta^s\overline{\zeta}^t}$ commute on $A^2(\Omega_m^n)$.
\item[(c)] $(p_i,q_i,s_i,t_i)$ satisfies condition (I) for all $1\leq i\leq n$,
and there exist some real numbers $\mu, \nu$ and $a\geq \mu/2, b\geq \nu/2$ such that
 \begin{align}\label{eq1}
&\frac{\Gamma\left(\eta+|\widehat{p}|\right)
\Gamma\left(\eta+\nu+1\right)
\Gamma\left(\eta+\mu+|\widehat{s}|\right)}
{\Gamma\left(\eta+|\widehat{s}|\right)\Gamma\left(\eta+\mu+1\right)
\Gamma\left(\eta+\nu+|\widehat{p}|\right)}=\frac
{\left(\eta+b\right)\left(\eta+a+\nu\right)}{\left(\eta+a\right)\left(\eta+b+\mu\right)}
 \end{align}
 for any $\eta\in\mathbb{C}$ on some right half-plane. In this case,
 $|\widehat{q}|=|\widehat{p}|-\mu, |\widehat{t}|=|\widehat{s}|-\nu, l=2a-\mu$ and $k=2b-\nu$.
\end{enumerate}
\end{theorem}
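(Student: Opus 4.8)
The plan is to realize the two operators as generalized weighted shifts on the orthogonal monomial basis $\{z^{\alpha}\}_{\alpha\in\N^{n}}$ of $A^{2}(\Omega_{m}^{n})$ and to reduce all three conditions to one identity among products of Gamma functions. Since $(b)\Rightarrow(a)$ is trivial, the work lies in $(a)\Rightarrow(c)$ and $(c)\Rightarrow(b)$, and both rest on a single computation. Using $m$-polar coordinates together with the substitution $u_{i}=|z_{i}|^{2m_{i}}$, every relevant integral collapses to a Dirichlet--Liouville integral over the standard simplex (essentially \cite[Lemma 3.6]{QS}); writing $N=\sum_{i=1}^{n}1/m_{i}$ and $r^{l}\zeta^{p}\overline{\zeta}^{q}=r^{\,l-|\widehat p|-|\widehat q|}z^{p}\overline{z}^{q}$, one gets $\|z^{\alpha}\|^{2}=c_{m}\,\Gamma(|\widehat\alpha|+N+1)^{-1}\prod_{i}\Gamma((\alpha_{i}+1)/m_{i})$, and $T_{r^{l}\zeta^{p}\overline{\zeta}^{q}}z^{\alpha}=0$ unless $\alpha+p-q\in\N^{n}$, in which case it equals $\lambda_{p,q,l}(\alpha)\,z^{\alpha+p-q}$ with
\[
\lambda_{p,q,l}(\alpha)=\Bigl[\,\prod_{i=1}^{n}\frac{\Gamma((\alpha_{i}+p_{i}+1)/m_{i})}{\Gamma((\alpha_{i}+p_{i}-q_{i}+1)/m_{i})}\,\Bigr]\,R_{p,q,l}(|\widehat\alpha|),\qquad
R_{p,q,l}(x)=\frac{\Gamma\bigl(x+|\widehat p|-|\widehat q|+N+1\bigr)}{\Gamma\bigl(x+|\widehat p|+N\bigr)\bigl(x+N+\tfrac{l+|\widehat p|-|\widehat q|}{2}\bigr)}\,.
\]
A short estimate using $|\zeta_{i}|\le1$ shows every factor here is finite and strictly positive for the admissible $\alpha$, so $\lambda_{p,q,l}(\alpha)\neq0$.

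I would then note that both compositions $T_{r^{l}\zeta^{p}\overline{\zeta}^{q}}T_{r^{k}\zeta^{s}\overline{\zeta}^{t}}$ and $T_{r^{k}\zeta^{s}\overline{\zeta}^{t}}T_{r^{l}\zeta^{p}\overline{\zeta}^{q}}$ send $z^{\alpha}$ to a scalar multiple of $z^{\alpha+(p-q)+(s-t)}$, so the commutator is a weighted shift: it has finite rank iff its coefficient $c_{\alpha}$ vanishes for all but finitely many $\alpha$, and it vanishes identically iff $c_{\alpha}=0$ for every $\alpha$. Dividing out the nonzero numbers $\lambda_{p,q,l}(\alpha)$, $\lambda_{s,t,k}(\alpha)$, the equation $c_{\alpha}=0$ becomes
\[
\prod_{i=1}^{n}\Theta_{i}(\alpha_{i})=\Psi(|\widehat\alpha|),
\]
where $\Theta_{i}$ is an explicit quotient of four Gamma-factors in the single variable $\alpha_{i}$, built from the four numerator shifts $\{p_{i}-q_{i},\,s_{i},\,p_{i}+s_{i}-t_{i},\,p_{i}-q_{i}+s_{i}-t_{i}\}$ and the four denominator shifts $\{p_{i},\,s_{i}-t_{i},\,p_{i}-q_{i}+s_{i},\,p_{i}-q_{i}+s_{i}-t_{i}\}$ (these have equal sums, so $\Theta_{i}\to1$ at infinity), and $\Psi(x)=R_{s,t,k}(x+\mu)R_{p,q,l}(x)/\bigl(R_{s,t,k}(x)R_{p,q,l}(x+\nu)\bigr)$ with $\mu=|\widehat p|-|\widehat q|$, $\nu=|\widehat s|-|\widehat t|$. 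A direct manipulation of the Gamma quotients shows that $\Psi\equiv1$ is, after clearing denominators and putting $\eta=x+N$, $a=\tfrac{l+\mu}{2}$, $b=\tfrac{k+\nu}{2}$, exactly identity \eqref{eq1}; comparing the finitely many poles and zeros of its two sides then pins $\mu,\nu,a,b$ down and yields the four relations in (c).

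For $(a)\Rightarrow(c)$: finite rank gives $\prod_{i}\Theta_{i}(\alpha_{i})=\Psi(|\widehat\alpha|)$ for all $\alpha$ with every coordinate large. Freezing all coordinates but $\alpha_{j}$, each side is a ratio of Gamma functions times a rational function of $\alpha_{j}$, and the step $\alpha_{j}\mapsto\alpha_{j}+m_{j}$ multiplies each side by a \emph{rational} function of $\alpha_{j}$; since the two sides agree for all large $\alpha_{j}$, those rational multipliers agree, so the quotient of the two sides is a meromorphic function invariant under $\alpha_{j}\mapsto\alpha_{j}+m_{j}$ which equals $1$ on an arithmetic progression. Being of the stated special type it has polynomial growth in the imaginary direction (Stirling), hence descends to a rational function of $e^{2\pi i\alpha_{j}/m_{j}}$; a function simultaneously of this form and a ratio of Gammas times a rational function must be constant, hence $\equiv1$, and the identity holds for all complex $\alpha_{j}$. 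Doing this for each coordinate and then varying the frozen coordinates forces $\Psi(x+1)/\Psi(x)$ to be constant (being independent of the auxiliary parameters), then to equal $1$ (comparing it with the monic-over-monic degree-$4$ step-multiplier of $\Theta_{j}$), so $\Psi$ is $1$-periodic and each $\Theta_{j}$'s step-multiplier is $\equiv1$, i.e.\ the two four-element multisets of shifts coincide. Matching those multisets — cancel the common entry $p_{i}-q_{i}+s_{i}-t_{i}$, then match the remaining three-element multisets in all six ways — reproduces precisely the six alternatives of Condition (I) for $(p_{i},q_{i},s_{i},t_{i})$. Finally, with every $\Theta_{i}\equiv1$ the master identity becomes $\Psi(|\widehat\alpha|)=1$ for all large $\alpha$, and since $\Psi$ is $1$-periodic this propagates to $\Psi\equiv1$, i.e.\ \eqref{eq1}.

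For $(c)\Rightarrow(b)$ I would reverse this: a one-line substitution checks that each of the six cases of Condition (I) makes $\Theta_{i}\equiv1$; identity \eqref{eq1}, with the parameters necessarily equal to $\mu=|\widehat p|-|\widehat q|$, $\nu=|\widehat s|-|\widehat t|$, $a=\tfrac{l+\mu}{2}$, $b=\tfrac{k+\nu}{2}$, is exactly $\Psi\equiv1$; and Condition (I) coordinatewise also rules out the asymmetric possibility that one composition kills $z^{\alpha}$ while the other does not (in each case $\alpha_{i}+s_{i}-t_{i}\ge0$ and $\alpha_{i}+p_{i}-q_{i}+s_{i}-t_{i}\ge0$ force $\alpha_{i}+p_{i}-q_{i}\ge0$), so $c_{\alpha}=0$ for every $\alpha$ and the operators commute. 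Of all this, the integration bookkeeping for $\lambda_{p,q,l}$ and the six-case multiset matching are routine; I expect the genuine obstacle to be the rigidity step — upgrading ``$c_{\alpha}=0$ for large $\alpha$'' to a global meromorphic identity and then deducing $\Psi\equiv1$ — since it must disentangle the $n$ lattice variables $\alpha_{i}$ from the single radial combination $|\widehat\alpha|=\sum_{i}\alpha_{i}/m_{i}$, which is exactly what the ``step-by-$m_{j}$ $\Rightarrow$ rational multiplier, plus polynomial growth $\Rightarrow$ constant'' mechanism is designed to handle.
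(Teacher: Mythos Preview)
Your proposal is correct and arrives at the same master identity $\prod_i\Theta_i(\alpha_i)=\Psi(|\widehat\alpha|)$ that drives the paper's proof, but you handle the two crucial technical steps differently.

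For the extension from ``$c_\alpha=0$ for all large lattice points'' to a meromorphic identity in complex $\alpha$, the paper simply quotes a Carlson-type result, \cite[Proposition~3.2]{Le}, for polynomially bounded holomorphic functions vanishing on a shifted copy of $\N^n$. Your replacement is a self-contained periodicity argument: the step $\alpha_j\mapsto\alpha_j+m_j$ multiplies each side by a rational function, these agree on integers hence identically, so the quotient is $m_j$-periodic, and Stirling growth forces it to be constant. This is a perfectly valid alternative, though you should note explicitly that on a far-enough right half-plane the quotient has no zeros or poles, so periodicity makes it entire and nonvanishing before you invoke the growth bound.

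For the separation of variables, the paper isolates a clean lemma (its Proposition~\ref{uui}): any factorization $f(\zeta_1+\cdots+\zeta_n)=\prod_i f_i(\zeta_i)$ with nonvanishing analytic $f_i$ forces each $f_i=c_ie^{\lambda\zeta_i}$; boundedness of each $F_i$ then gives $F_i\equiv1$. You instead take step-ratios and vary the frozen coordinates to force $\Psi(x+1)/\Psi(x)$ and each $\Theta_j(\alpha_j+m_j)/\Theta_j(\alpha_j)$ to be identically $1$, and then match the resulting multisets of shifts. Both routes yield the same polynomial identity (and hence Condition~(I)); the paper's route is shorter and more conceptual, while yours avoids the logarithmic-derivative trick at the cost of more bookkeeping. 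One small point: once you have extended the identity to all complex $\alpha$ with large real parts, $\Psi\equiv1$ follows directly on a right half-plane without invoking $1$-periodicity, so your final sentence in the $(a)\Rightarrow(c)$ paragraph can be streamlined.

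Your treatment of $(c)\Rightarrow(b)$, including the coordinatewise check that Condition~(I) prevents one composition from killing $z^\alpha$ while the other does not, matches the paper's use of \cite[Lemma~1]{DZhu} and is correct.
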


While one would wish for a more conceptual conclusion,
there exist too many cases for the tuple $\left(|\widehat{p}|, |\widehat{s}|, \mu, \nu, a, b\right)$ satisfying the identity \eqref{eq1}.
Consequently, the commuting monomial-type Toeplitz operators on $A^2(\Omega_m^n)$ involves not
only the predictable cases but also plenty of non-trivial cases (see Corollary~\ref{cor five} and Example~\ref{ex nontri}).
It is also worth to mention that the weakly pseudoconvex domains $\Omega^n_m$ have many rich structural characteristics. For example, two of these domains are biholomorphically equivalent only if the corresponding $m_i$'s are the same up to permutation (see \cite{W} for example).
However, the operator theory on such family of domains becomes even more complicated and interesting (see Example~\ref{ex nontri2}).

As a consequence of Theorem~\ref{thm1}, some interesting necessary conditions for two monomial-type Toeplitz operators commuting on $A^2(\Omega_m^n)$ are obtained (see Corollary~\ref{cor nec}). Consequently, there exist a finite number of the combination of the tuple $(|\widehat{p}|,|\widehat{s}|,\mu,\nu,a,b)$ satisfying the identity \eqref{eq1}. Also, we get some differences of the operator theory between on the weakly pseudoconvex domains and on the unit ball.


For the finite rank semi-commutator problem, \cite[Theorem B]{DZhu} showed that the semi-commutator of two monomial Toeplitz operators on $A^2(\mathbb{B}^n)$ has finite
rank only in trivial cases: the first operator has conjugate analytic symbol or the second
operator has analytic symbol. By contrast, our second result produces non-trivial cases for semi-commuting monomial-type Toeplitz operators on $A^2(\Omega_m^n)$.
\begin{theorem}\label{thm2}
Let $l,k\in\mathbb{R}_+$, $p,q,s,t\in\mathbb{N}^n$. Then the following statements are equivalent:
\begin{enumerate}
\item[(a)] The semi-commutator $\left(T_{r^l\zeta^p\overline{\zeta}^q}, T_{r^k\zeta^s\overline{\zeta}^t}\right]$
      on $A^2(\Omega_m^n)$ has finite rank.
\item[(b)] $T_{r^l\zeta^p\overline{\zeta}^q}T_{r^k\zeta^s\overline{\zeta}^t}= T_{r^{l+k}\zeta^{p+s}\overline{\zeta}^{q+t}}$ on $A^2(\Omega_m^n)$.
\item[(c)] At least one of the following conditions holds:
\begin{enumerate}
  \item[(i)] $l=|\widehat{q}|-|\widehat{p}|$ and $t=0$;
  \item[(ii)] $k=|\widehat{s}|$ and $t=0$;
  \item[(iii)] $l=|\widehat{q}|$ and $p=0$;
  \item[(iv)] $k=|\widehat{s}|-|\widehat{t}|$ and $p=0$.
\end{enumerate}
\end{enumerate}
\end{theorem}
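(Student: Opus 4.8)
The plan is to prove the cycle $(b)\Rightarrow(a)\Rightarrow(c)\Rightarrow(b)$, of which the first implication is trivial. Everything rests on the explicit action of a monomial-type operator on the orthogonal monomial basis of $A^2(\Omega_m^n)$: writing $z^\gamma=r^{|\widehat\gamma|}\zeta^\gamma$ and evaluating the defining integral in $m$-polar coordinates via \cite[Lemma 3.6]{QS}, one records
\[
T_{r^L\zeta^a\overline\zeta^b}z^\gamma=\lambda_{L,a,b}(\gamma)\,z^{\gamma+a-b}\ \text{ if }\ \gamma_i+a_i\ge b_i\ (1\le i\le n),\qquad T_{r^L\zeta^a\overline\zeta^b}z^\gamma=0\ \text{ otherwise},
\]
where $\lambda_{L,a,b}(\gamma)$ is an explicit product of Gamma-quotients in the variables $\frac{a_i+\gamma_i+1}{m_i}$, $\frac{a_i+\gamma_i-b_i+1}{m_i}$ and in $|\widehat\gamma|$. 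Composing, the semi-commutator sends $z^\alpha$ to $c(\alpha)\,z^{\alpha+p+s-q-t}$, with $c(\alpha)=\lambda_{l,p,q}(\alpha+s-t)\lambda_{k,s,t}(\alpha)$ on $A_1:=\{\alpha:\alpha_i\ge\max(t_i-s_i,\,q_i+t_i-p_i-s_i)\ \forall i\}$, from which one subtracts, on the larger set $A_2:=\{\alpha:\alpha_i\ge q_i+t_i-p_i-s_i\ \forall i\}\supseteq A_1$, the term $\lambda_{l+k,p+s,q+t}(\alpha)$. As the monomials form a basis, finite rank is equivalent to $c(\alpha)=0$ for all but finitely many $\alpha\in\N^n$, and this is how hypothesis (a) is used.

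For $(c)\Rightarrow(b)$ I would split the four cases into two pairs. In (ii) and (iii) the relevant symbol degenerates: $r^k\zeta^s\overline\zeta^t=z^s$ is holomorphic when $k=|\widehat s|,\,t=0$, and $r^l\zeta^p\overline\zeta^q=\overline z^q$ is antiholomorphic when $l=|\widehat q|,\,p=0$, so the multiplicativity $T_gT_h=T_{gh}$ applies (on $\Omega_m^n$ it follows as usual from $\langle P(g\cdot Ph),z^\beta\rangle=\langle gh,z^\beta\rangle$ for holomorphic $h$, and by taking adjoints for antiholomorphic $g$, using $T_u^*=T_{\overline u}$). Cases (i) and (iv) are interchanged by the identity $(T_{f_1},T_{f_2}]^*=(T_{\overline{f_2}},T_{\overline{f_1}}]$, so it suffices to verify (i): substituting $l=|\widehat q|-|\widehat p|$ and $t=0$ into $\lambda_{l,p,q}(\alpha+s)\lambda_{k,s,0}(\alpha)=\lambda_{l+k,p+s,q}(\alpha)$ (for $t=0$ the sets $A_1$ and $A_2$ coincide) and checking that the Gamma factors cancel identically.

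For $(a)\Rightarrow(c)$ the argument has a support part and a coefficient part. On $A_2\setminus A_1$ only $-\lambda_{l+k,p+s,q+t}(\alpha)$ survives, and there it never vanishes (the relevant Gamma arguments, and the accompanying linear factor in $|\widehat\alpha|$, are strictly positive once $\alpha\in A_2$), so finite rank forces $A_2\setminus A_1$ to be finite; since $n>1$, this is possible only if for each $i$ one has $t_i-s_i\le\max(q_i+t_i-p_i-s_i,0)$, i.e.\ $p_i\le q_i$ or $t_i\le s_i$. On $A_1$, where the identity $\lambda_{l,p,q}(\alpha+s-t)\lambda_{k,s,t}(\alpha)=\lambda_{l+k,p+s,q+t}(\alpha)$ must hold off a finite set, cancelling the common factors (in particular two of the Gamma products coincide and drop out) reduces it to
\[
\prod_{i=1}^{n}\frac{\Gamma\!\big(\tfrac{\alpha_i+c_i+p_i}{m_i}\big)\,\Gamma\!\big(\tfrac{\alpha_i+c_i+t_i}{m_i}\big)}{\Gamma\!\big(\tfrac{\alpha_i+c_i}{m_i}\big)\,\Gamma\!\big(\tfrac{\alpha_i+c_i+p_i+t_i}{m_i}\big)}=h\big(|\widehat\alpha|\big),\qquad c_i=s_i-t_i+1,
\]
where, in the notation $\mu,\nu,a,b$ of Theorem~\ref{thm1}, $h$ is a quotient of two Gamma functions of $|\widehat\alpha|$ times a rational factor. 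Taking discrete differences in the coordinate directions $\alpha\mapsto\alpha+m_je_j$ (room for which is ensured by $n>1$) shows that $H(N+1)-H(N)$ is constant along an integer-spaced set of values of $N=|\widehat\alpha|$ and that each $\log$-factor on the left is affine along its progression $\alpha_i^{0}+m_i\Z$; since each such factor is a product of Gamma-quotients whose shifts have equal sums it tends to $1$, and $h(M)\to1$ as well (its Gamma part decaying like $M^{-1}$ against the linear growth of its rational factor), so every factor is identically $1$ and $h\equiv1$. A Stirling expansion then gives that the $i$-th factor being $\equiv1$ forces $p_it_i=0$; and $h\equiv1$, i.e.\ $\frac{\Gamma(M+\nu+|\widehat p|)\Gamma(M+|\widehat s|)}{\Gamma(M+\nu+1)\Gamma(M+|\widehat p|+|\widehat s|)}=\frac{M+a+b}{(M+\nu+a)(M+b)}$, forces (after analytic continuation in $M$, as in the proof of Theorem~\ref{thm1}) $|\widehat p|\in\Z$ or $|\widehat t|\in\Z$, and, matching the two rational functions, rigid relations among $|\widehat p|,|\widehat s|,\mu,\nu,a,b$. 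Feeding $p_it_i=0$ and the support inequalities back in, one is left with exactly $t=0$ together with $a=0$ (case (i)) or $k=|\widehat s|$ (case (ii)), or symmetrically $p=0$ together with $l=|\widehat q|$ (case (iii)) or $k=|\widehat s|-|\widehat t|$ (case (iv)); a few boundary situations ($s=0$, $p=t=0$, one operator being the identity) are disposed of directly.

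The crux is this last step: promoting the coordinatewise conditions $p_it_i=0$ to the global dichotomy $t=0$ or $p=0$, and then to the precise list (i)--(iv), which can be done only by exploiting the $|\widehat\alpha|$-dependent factor $h$. The fractional arithmetic produced by the exponents $m_i$ makes the identity between a quotient of Gamma functions and a rational function (and the case analysis it spawns) the genuinely delicate part, and it is also there, together with the separation-of-variables step, that the hypothesis $n>1$ enters essentially.
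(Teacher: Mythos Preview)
Your overall architecture matches the paper's: $(b)\Rightarrow(a)$ is trivial, and both $(a)\Rightarrow(c)$ and $(c)\Rightarrow(b)$ hinge on Lemma~\ref{and} and on factoring the resulting coefficient as a product over $i$ of Gamma-quotients times a function of $|\widehat\alpha|$ alone. Your treatment of $(c)\Rightarrow(b)$ via holomorphic/antiholomorphic multiplicativity for cases (ii) and (iii), together with the adjoint involution $(T_{f_1},T_{f_2}]^{*}=(T_{\overline{f_2}},T_{\overline{f_1}}]$ pairing (i) with (iv), is a legitimate and slightly cleaner alternative to the paper's direct verification that $G$ and each $G_i$ equal $1$; you still owe the Gamma-cancellation check in case (i) and the boundary case $\alpha\notin A_2$, but these are routine.

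The genuine gap is in $(a)\Rightarrow(c)$. Your discrete-difference version of the separation-of-variables step (in place of the paper's Proposition~\ref{uui} after analytic continuation) is valid and correctly delivers both $G_i\equiv 1$ (hence $p_it_i=0$) and $h\equiv 1$. But the decisive move is extracting the \emph{global} dichotomy $|\widehat t|=0$ or $|\widehat p|=0$ from $h\equiv 1$, i.e.\ from
\[
\frac{\Gamma(M+|\widehat s|)\,\Gamma(M+\nu+|\widehat p|)}{\Gamma(M+\nu+1)\,\Gamma(M+|\widehat p|+|\widehat s|)}=\frac{M+a+b}{(M+b)(M+\nu+a)}.
\]
You stop after the easy observation that rationality of the right-hand side forces $|\widehat p|\in\N$ or $|\widehat t|\in\N$, and then invoke unspecified ``rigid relations''. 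That is precisely where the paper's work sits: Lemma~\ref{lem5} carries out a nontrivial case analysis (the cases $|\widehat t|=1,2$ by hand, the case $|\widehat t|\ge 3$ by matching the extremal linear factors on both sides and deriving a contradiction) to show that if $|\widehat t|$ is a positive integer then necessarily $|\widehat p|=0$, and symmetrically. Neither the coordinatewise conditions $p_it_i=0$ nor your support inequalities $p_i\le q_i$ or $t_i\le s_i$ can replace this step: already for $n=2$, $p=(1,0)$, $t=(0,1)$ (with $q=(1,0)$, $s=(0,1)$, say) both auxiliary conditions hold while $p\ne 0$ and $t\ne 0$. Conversely, once Lemma~\ref{lem5} is established those auxiliary facts become redundant, since $|\widehat t|=0\iff t=0$ for $t\in\N^n$; so your support analysis, though correct, does not buy the missing implication. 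You need either to prove the analogue of Lemma~\ref{lem5} in full or to find a genuinely different route from $h\equiv 1$ to $t=0$ or $p=0$.
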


From Theorem~\ref{thm2} we get that the semi-commutator of two monomial Toeplitz operators $T_{z^p\overline{z}^q}$ and $T_{z^s\overline{z}^t}$ on $A^2(\Omega_m^n)$ has finite rank
if and only if either $p=0$ or $t=0$, which is parallel to the result in \cite[Theorem B]{DZhu}.
In sharp contrast to the monomial case, there exist many non-trivial semi-commuting monomial-type Toeplitz operators. Furthermore, it is interesting to observe that
the operator $T_{r^{\left(|\widehat{q}|-|\widehat{p}|\right)}\zeta^p\overline{\zeta}^q}$ semi-commutes with all Toeplitz operators induced by symbol functions of the form $r^k\zeta^s$.

In addition, we would like to mention that the situation in the unit disk is quite different from the case of higher dimensional weakly pseudoconvex domains.
In fact, the commutators or semi-commutators of two monomial-type Toeplitz operators on the Bergman space of the unit disk have finite
rank only in several trivial cases, see \cite[Corollaries 11 and 12]{DZ6}.

While the main idea of our method of proofs is adapted from \cite{DZhu}, substantial
amount of unexpected analysis is required to overcome some different nature of the
weakly pseudoconvex domains and monomial-type symbols. For example, the method of characterizing (semi-)commuting k-quasi-homogeneous Toeplitz
operators relies on explicit formulas for the action of the operators on the orthonormal
basis of the Bergman space. This action leads to a holomorphic identity on a domain in the complex n-space,
see \eqref{eq8} for example. 
Then the most critical step in the proof is the analysis of such identity.
To deal with this, \cite{DZhu} depended on the distribution of zeros of a holomorphic function,
which does not work in the case of monomial-type symbols. In this paper, we devise a
completely new approach, which simplify the proof greatly even though the domain and the operators seem more general and complicated
than those in \cite{DZhu}. Roughly speaking, we rewrite the identity \eqref{eq8} as \eqref{eq9}
and show that each function appeared in \eqref{eq9} must be a constant multiple of the exponent function.
Since all of them are bounded on the right half-plane with a certain growth at infinity, they must be the constant function.

We end this introduction by mentioning that
the implication (a) to (b) of Theorem~\ref{thm1} or Theorem~\ref{thm2} is in fact predictable: on the Bergman space, finite rank
commutators and semi-commutators with bounded symbols are usually zero. For example, in the case of the Bergman space of the
unit disk, Guo, Sun and Zheng \cite{GSZ} showed that there is no non-zero finite rank commutator or semi-commutator
of Toeplitz operators induced by bounded harmonic symbols.
But the problem is very much open for other symbol classes.




\section{Proofs of Theorems~\ref{thm1} and \ref{thm2}}

We start this section with the following proposition which will play an essential role in the proofs of our main theorems.

\begin{proposition}\label{uui}
Suppose each function $f_i$, $i\in\{1,\cdots, n\}$, is analytic and has no zero on certain right half-plane $\prod_{a_i}^+=\{\zeta_i\in\mathbb{C}: \re \zeta_i>a_i\}$ with $a_i\in\mathbb{R}$. If there exists an analytic function $f$ such that
  \begin{equation}\label{eq2}
    f(\zeta_1+\cdots+\zeta_n)=\prod\limits_{i=1}^n f_i(\zeta_i)
  \end{equation}
for any $\zeta_i\in\prod_{a_i}^+$, then
$$f_i(\zeta_i)=c_i e^{\lambda \zeta_i},\qquad i\in\{1,\cdots, n\},$$ for some complex constants $c_i$ and $\lambda$ (independent of $i$).
\end{proposition}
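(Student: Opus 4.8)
The core idea is a standard "Cauchy–functional-equation" argument adapted to the analytic setting. First I would reduce to a differential identity. Fix all variables but one: for each $i$, differentiating \eqref{eq2} in $\zeta_i$ gives $f'(\zeta_1+\cdots+\zeta_n) = f_i'(\zeta_i)\prod_{j\neq i}f_j(\zeta_j)$. Dividing this by \eqref{eq2} itself (which is legitimate since each $f_j$ is zero-free on $\prod_{a_j}^+$, and one checks $f$ is zero-free on the corresponding translated half-plane because the product on the right is) yields
\begin{equation*}
\frac{f'(\zeta_1+\cdots+\zeta_n)}{f(\zeta_1+\cdots+\zeta_n)} = \frac{f_i'(\zeta_i)}{f_i(\zeta_i)}
\end{equation*}
for every $i$. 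Write $g_i(\zeta_i) = f_i'(\zeta_i)/f_i(\zeta_i)$ and $G(w) = f'(w)/f(w)$. Then $G(\zeta_1+\cdots+\zeta_n) = g_i(\zeta_i)$ for each $i$; in particular $g_i(\zeta_i)$ does not depend on the other $\zeta_j$'s, and comparing the expressions for two different indices shows $g_i(\zeta_i) = g_j(\zeta_j)$ whenever both sides make sense on the relevant half-planes. Since the left side depends only on $\zeta_i$ and the right side only on $\zeta_j$ (distinct variables), each $g_i$ must be a constant, say $\lambda$, and the same $\lambda$ works for all $i$ (and equals $G$).

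Second, I would integrate. From $f_i'/f_i \equiv \lambda$ on the connected open set $\prod_{a_i}^+$ we get $f_i(\zeta_i) = c_i e^{\lambda \zeta_i}$ for some nonzero constant $c_i$, which is exactly the claim; plugging back into \eqref{eq2} shows $f(w) = c e^{\lambda w}$ with $c = \prod_i c_i$, consistent but not needed for the statement. One should be slightly careful that the "right half-plane" on which $f$ is analytic and zero-free is the Minkowski sum of the $\prod_{a_i}^+$, i.e. $\{\re w > a_1+\cdots+a_n\}$, so that all the division steps are valid there; this is immediate since as $\zeta_j$ ranges over $\prod_{a_j}^+$ the sum ranges over that half-plane.

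The only genuinely delicate point — really the heart of the argument — is justifying that $g_i$ is constant rigorously rather than hand-wavingly. The clean way: from $G(\zeta_1+\cdots+\zeta_n)=g_1(\zeta_1)$, hold $\zeta_2,\dots,\zeta_n$ fixed and let $\zeta_1$ vary; this identifies $G$ on the half-plane with a translate of $g_1$, so $G$ is analytic there and $g_1(\zeta_1) = G(\zeta_1 + c)$ where $c=\zeta_2+\cdots+\zeta_n$ is an arbitrary point of a half-plane. But the left side is independent of $c$, so $G$ is translation-invariant under all shifts by a half-plane's worth of complex numbers; by the identity theorem $G$ is constant on its half-plane of analyticity, hence $g_i \equiv G \equiv \lambda$. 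Everything else is routine. I do not anticipate needing any growth or boundedness hypotheses here — those enter later in the paper, not in this proposition — so the proof is purely local-analytic and short.
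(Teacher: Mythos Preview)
Your proposal is correct and follows essentially the same approach as the paper's own proof: both reduce to the logarithmic-derivative identity $f'/f = f_i'/f_i$, compare two indices to force each $f_i'/f_i$ to be a constant $\lambda$ independent of $i$, and then integrate. The paper reaches this identity by first taking logarithms and then differentiating, whereas you differentiate first and then divide; you also supply more careful justification of the domain on which $f$ is zero-free and a translation-invariance argument for the constancy of $G$, but these are refinements of the same argument rather than a different route.
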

\begin{proof}
Taking the natural logarithm on both sides of \eqref{eq2}, we get
$$\ln f(\zeta_1+\cdots+\zeta_n)=\ln f_1(\zeta_1)+\cdots+\ln f_n(\zeta_n).
$$
Fix any $i\in\{1,\cdots, n\}$ and take partial derivatives with respect to $\zeta_i$ on both sides. The result is
$$\frac{f'(\zeta_1+\cdots+\zeta_n)}{f(\zeta_1+\cdots+\zeta_n)}
=\frac{f'_i(\zeta_i)}{f_i(\zeta_i)}.$$
So for any $i\neq j$ we have
\begin{equation}\label{eq3}
  \frac{f'_i(\zeta_i)}{f_i(\zeta_i)}=\frac{f'_j(\zeta_j)}{f_j(\zeta_j)}.
\end{equation}
Note that the left-hand side of \eqref{eq3} depends only on $\zeta_i$ and the right-hand side depends only on $\zeta_j$,
Therefore, both sides of \eqref{eq3} should be a constant independent of $i$. Then we have
$$f'_i(\zeta_i)=\lambda\ f_i(\zeta_i)$$
for some $\lambda \in\mathbb{C}$, which implies that
$f_i(\zeta_i)=c_i e^{\lambda \zeta_i}$
for some $c_i\in\mathbb{C}$, as desired.
\end{proof}

Throughout the rest of this paper, we are going to use the following notations.
For two multi-indexes $\alpha=(\alpha_1,\cdots,\alpha_n)$ and
$\beta=(\beta_1,\cdots,\beta_n)$ in $\mathbb{N}^n$, we write $\alpha+\beta=(\alpha_1+\beta_1,\cdots,\alpha_n+\beta_n)$ and $\alpha\succeq\beta$ if $\alpha_i\geq\beta_i$ for all $i\in\{1,\cdots,n\}$.
To simplify the notation, we also write $\alpha+1=(\alpha_1+1,\cdots,\alpha_n+1)$.
The reader should have no problem accepting this slightly confusing notation.
In addition, we will need to use the following formulas for integration on $\Omega^n_m$ and $\mathbb{S}_m^n$:
\begin{equation}\label{eq4}
  \int_{\Omega_m^n}f(z)dV(z)=
  \int_0^1r^{2\left(\sum\limits_{i=1}^n{\frac{1}{m_i}}\right)-1}dr
  \int_{\mathbb{S}_m^n}f(r,\zeta)dS(\zeta)
\end{equation}
for every $f\in L^1(\Omega_m^n)$, and
\begin{equation}\label{eq5}
\int_{\mathbb{S}_m^n}\zeta^{\alpha}\overline{\zeta}^{\beta}dS(\zeta)
 =\delta_{\alpha,\beta}
\frac{2\pi^n\prod\limits_{i=1}^n\Gamma\left(\frac{\alpha_i+1}{m_i}\right)}
  {\left(\prod\limits_{i=1}^nm_i\right)\Gamma\left(\sum\limits_{i=1}^n{\frac{\alpha_i+1}{m_i}}\right)},\end{equation}
where $dS$ denotes the hyper surface measure on $\mathbb{S}_m^n$. For more details we refer the reader to \cite{QS}.
Then a direct calculation gives the following simple lemma.

\begin{lemma}\label{and}
Let $l\in\mathbb{R}_+$ and $p,q\in\mathbb{N}^n$. Then on $A^2(\Omega_m^n)$, for each $\beta\in\mathbb{N}^n$, we have
\begin{align*}
T_{r^l\zeta^p\overline{\zeta}^q}(z^{\beta})
&=\left\{
{\begin{array}{*{18}c}
\frac{\Gamma\left(|\widehat{\beta+1}|+|\widehat{p}|-|\widehat{q}|+1\right)
  \prod\limits_{i=1}^n\Gamma\left(\frac{\beta_i+p_i+1}{m_i}\right)}
    {\left(|\widehat{\beta+1}|+\frac{l}{2}+\frac{|\widehat{p}|}{2}-\frac{|\widehat{q}|}{2}\right)
  \Gamma\left(|\widehat{\beta+1}|+|\widehat{p}|\right)
  \prod\limits_{i=1}^n\Gamma\left(\frac{\beta_i+p_i-q_i+1}{m_i}\right)}
\ z^{\beta+p-q},&{\beta+p\succeq q},\\
0,&{\beta+p\not\succeq q}.
\end{array}}
\right.
\end{align*}
\end{lemma}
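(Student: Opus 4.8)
The plan is to compute $T_{r^l\zeta^p\overline{\zeta}^q}(z^\beta)$ directly from the definition $T_u(f)=P(uf)$, using the explicit integration formulas \eqref{eq4} and \eqref{eq5} together with the known orthonormal structure of $A^2(\Omega_m^n)$. First I would recall (or re-derive from \eqref{eq4}--\eqref{eq5}) that the monomials $z^\beta$, $\beta\in\mathbb{N}^n$, are mutually orthogonal in $A^2(\Omega_m^n)$, and compute the squared norm $\|z^\beta\|^2$ by taking $f(z)=|z^\beta|^2=z^\beta\overline{z}^\beta$ in \eqref{eq4}: the radial integral contributes $\int_0^1 r^{2|\widehat{\beta+1}|-1}\,dr=\frac{1}{2|\widehat{\beta+1}|}$ after writing $|z_i|^{2\beta_i}=(r^{2/m_i})^{\beta_i}|\zeta_i|^{2\beta_i}$ and collecting powers of $r$, while the sphere integral is given by \eqref{eq5} with $\alpha=\beta$. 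This yields a closed form for $\|z^\beta\|^2$ in terms of Gamma functions.

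Next I would express $T_{r^l\zeta^p\overline{\zeta}^q}(z^\beta)$ via its Fourier coefficients against this orthogonal basis: for each $\gamma\in\mathbb{N}^n$,
\[
\langle T_{r^l\zeta^p\overline{\zeta}^q}(z^\beta),z^\gamma\rangle=\int_{\Omega_m^n} r^l\zeta^p\overline{\zeta}^q\, z^\beta\,\overline{z}^\gamma\, dV.
\]
Writing $z^\beta=r^{|\widehat\beta|}\zeta^\beta$ and $\overline{z}^\gamma=r^{|\widehat\gamma|}\overline\zeta^\gamma$ in $m$-polar coordinates, the integrand becomes $r^{l+|\widehat\beta|+|\widehat\gamma|}\,\zeta^{\beta+p}\overline\zeta^{\gamma+q}$, and \eqref{eq4} separates it into a radial integral $\int_0^1 r^{l+|\widehat\beta|+|\widehat\gamma|+2|\widehat 1|-1}\,dr$ and the sphere integral $\int_{\mathbb{S}_m^n}\zeta^{\beta+p}\overline\zeta^{\gamma+q}\,dS$. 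By the Kronecker delta in \eqref{eq5}, the latter vanishes unless $\beta+p=\gamma+q$, i.e. $\gamma=\beta+p-q$, which forces $\beta+p\succeq q$ (otherwise $\gamma\notin\mathbb{N}^n$ and the coefficient is zero for every admissible $\gamma$, giving $T_{r^l\zeta^p\overline{\zeta}^q}(z^\beta)=0$). When $\beta+p\succeq q$, only $\gamma=\beta+p-q$ survives, so $T_{r^l\zeta^p\overline{\zeta}^q}(z^\beta)=c\, z^{\beta+p-q}$ where $c=\langle T_{r^l\zeta^p\overline{\zeta}^q}(z^\beta),z^\gamma\rangle/\|z^\gamma\|^2$.

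Finally I would assemble the constant $c$ by dividing the two explicit expressions. The numerator uses $\int_0^1 r^{l+|\widehat\beta|+|\widehat\gamma|+2|\widehat 1|-1}\,dr=\bigl(l+|\widehat\beta|+|\widehat\gamma|+2|\widehat 1|\bigr)^{-1}$; substituting $|\widehat\gamma|=|\widehat\beta|+|\widehat p|-|\widehat q|$ and simplifying $\tfrac12\bigl(l+2|\widehat\beta|+|\widehat p|-|\widehat q|+2|\widehat 1|\bigr)=|\widehat{\beta+1}|+\tfrac{l}{2}+\tfrac{|\widehat p|}{2}-\tfrac{|\widehat q|}{2}$ reproduces exactly the first factor in the denominator of the claimed formula; the sphere integral \eqref{eq5} evaluated at $\alpha=\beta+p$ contributes $\prod_i\Gamma\bigl(\tfrac{\beta_i+p_i+1}{m_i}\bigr)$ over $\Gamma\bigl(|\widehat{\beta+p+1}|\bigr)=\Gamma\bigl(|\widehat{\beta+1}|+|\widehat p|\bigr)$. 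Dividing by $\|z^\gamma\|^2$ with $\gamma=\beta+p-q$ cancels the constants $2\pi^n/\prod_i m_i$ and introduces the factors $\Gamma\bigl(|\widehat{\beta+p-q+1}|+1\bigr)\big/\prod_i\Gamma\bigl(\tfrac{\beta_i+p_i-q_i+1}{m_i}\bigr)$; noting $|\widehat{\beta+p-q+1}|+1=|\widehat{\beta+1}|+|\widehat p|-|\widehat q|+1$ gives precisely the stated formula. The only mild subtlety is bookkeeping with the $m$-polar substitution and the identity $|\widehat{\alpha+1}|=\sum_i\frac{\alpha_i+1}{m_i}$, so that the half-integer shifts in the radial exponent line up correctly; this is the step most prone to arithmetic slips, but it is entirely routine.
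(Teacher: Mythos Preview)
Your proposal is correct and follows essentially the same route as the paper's own proof: compute $\langle T_{r^l\zeta^p\overline{\zeta}^q}(z^\beta),z^\gamma\rangle$ via the $m$-polar splitting \eqref{eq4}--\eqref{eq5}, read off from the Kronecker delta that only $\gamma=\beta+p-q$ survives (forcing $\beta+p\succeq q$), and then divide by $\|z^{\beta+p-q}\|^2$ to obtain the stated coefficient. The only difference is cosmetic---the paper quotes the orthonormal basis from \cite{CDR} rather than recomputing $\|z^\beta\|^2$, but the arithmetic is identical.
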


\begin{proof}
Fix any $\lambda\in\mathbb{N}^n$, then by \eqref{eq4} and \eqref{eq5} we have
\begin{align*}
&\left\langle T_{r^l\zeta^p\overline{\zeta}^q}(z^{\beta}),z^{\lambda}\right\rangle
=\int_{\Omega^n_m}r^l\zeta^p\overline{\zeta}^qz^{\beta}\overline{z}^{\lambda}dV(z)\\
&=\int_{0}^1r^{2\left(\sum\limits_{i=1}^n{\frac{1}{m_i}}\right)+\left(\sum\limits_{i=1}^n{\frac{\beta_i+\lambda_i}{m_i}}\right)+l-1}dr
 \int_{\mathbb{S}^n_m}\zeta^{\beta+p}\overline{\zeta}^{\lambda+q}dS(\zeta)\\
&=\left\{
{
\begin{array}{*{18}c}
\frac{\pi^n\delta_{\beta+p-q,\lambda}
\prod\limits_{i=1}^n\Gamma\left(\frac{\beta_i+p_i+1}{m_i}\right)}
    {\left(\prod\limits_{i=1}^n{m_i}\right)
    \left(\sum\limits_{i=1}^n{\frac{1}{m_i}\left(\beta_i+1+\frac{p_i-q_i}{2}\right)}+\frac{l}{2}\right)
  \Gamma\left(\sum\limits_{i=1}^n{\frac{\beta_i+p_i+1}{m_i}}\right)},&{\beta+p\succeq q},\\
0,&{\beta+p\not\succeq q}.
\end{array}
}
\right.
\end{align*}
Since
$$\left\{\sqrt{{\left(\prod\limits_{i=1}^nm_i\right)
  \Gamma\left(\sum\limits_{i=1}^n{\frac{\alpha_i+1}{m_i}}+1\right)}\bigg/
  {\pi^n\prod\limits_{i=1}^n\Gamma\left(\frac{\alpha_i+1}{m_i}\right)}}\; z^\alpha\right\}_{\alpha\in\mathbb{N}^n}$$
is an orthonormal basis for $A^2(\Omega_m^n)$ (see \cite{CDR} for example), we have if $\beta+p\not\succeq q$ then $T_{r^l\zeta^p\overline{\zeta}^q}(z^{\beta})=0$,
and if $\beta+p\succeq q$ then
\begin{align*}
&T_{r^l\zeta^p\overline{\zeta}^q}(z^{\beta})
=\frac{\left\langle T_{r^l\zeta^p\overline{\zeta}^q}(z^{\beta}),z^{\beta+p-q}\right\rangle}{\left\langle z^{\beta+p-q}, z^{\beta+p-q}\right\rangle}z^{\beta+p-q}\\
& =\frac{\Gamma\left(\sum\limits_{i=1}^n{\frac{\beta_i+1+p_i-q_i}{m_i}}+1\right)
  \prod\limits_{i=1}^n\Gamma\left(\frac{\beta_i+p_i+1}{m_i}\right)}
    {\left(\sum\limits_{i=1}^n{\frac{1}{m_i}\left(\beta_i+1+\frac{p_i-q_i}{2}\right)}+\frac{l}{2}\right)
  \Gamma\left(\sum\limits_{i=1}^n{\frac{\beta_i+p_i+1}{m_i}}\right)
  \prod\limits_{i=1}^n\Gamma\left(\frac{\beta_i+p_i-q_i+1}{m_i}\right)}
\ z^{\beta+p-q}.
\end{align*}
This completes the proof.
\end{proof}

We are now ready to prove Theorem~\ref{thm1} stated in the introduction, which characterizes all
finite rank commutators for monomial-type Toeplitz operators on $A^2(\Omega_m^n)$.
\begin{proof}[Proof of Theorem~\ref{thm1}]
It is trivial that (b) implies (a).

To prove that (a) implies (c), we simply write
\begin{equation}\label{eq6}
    \mu=|\widehat{p}|-|\widehat{q}|,\ \nu=|\widehat{s}|-|\widehat{t}|,\ a=\frac{l}{2}+\frac{|\widehat{p}|}{2}-\frac{|\widehat{q}|}{2},\ b=\frac{k}{2}+\frac{|\widehat{s}|}{2}-\frac{|\widehat{t}|}{2},
\end{equation}
and define
\begin{equation}\label{eq7}
H_{p,q,a}(\xi)=\frac{
  \Gamma\left(\sum\limits_{i=1}^n{\frac{\xi_i+1}{m_i}}+\mu+1\right)
  \prod\limits_{i=1}^n\Gamma\left(\frac{\xi_i+p_i+1}{m_i}\right)}
{\left(\sum\limits_{i=1}^n{\frac{\xi_i+1}{m_i}}+a\right)
  \Gamma\left(\sum\limits_{i=1}^n{\frac{\xi_i+1}{m_i}}+|\widehat{p}|\right)
  \prod\limits_{i=1}^n\Gamma\left(\frac{\xi_i+p_i-q_i+1}{m_i}\right)}.
\end{equation}
Using the same argument as in the proof of \cite[Lemma 3]{DZhu}, we can easily get that $H_{p,q,a}(\xi)$ is holomorphic and polynomially bounded on the domain $$\{\xi\in\mathbb{C}^n:\re \xi_i\geq \max\{0,q_i-p_i\},1\leq i\leq n\}.$$
Then for each $\beta\in\mathbb{N}^n$ with $\beta\succeq \gamma$, where $$\gamma_i=\max\{0,-p_i+q_i,-s_i+t_i,-p_i+q_i-s_i+t_i\}$$
for each $i\in\{1,\cdots,n\}$,
it follows from Lemma~\ref{and} that
\begin{align*}
&\left[T_{r^l\zeta^p\overline{\zeta}^q}, T_{r^k\zeta^s\overline{\zeta}^t}\right](z^\beta)\\
&=\left[ H_{s,t,b}(\beta)H_{p,q,a}(\beta+s-t)-H_{p,q,a}(\beta)H_{s,t,b}(\beta+p-q)\right]\,z^{\beta+p-q+s-t}.
\end{align*}

Assume $\left[T_{r^l\zeta^p\overline{\zeta}^q}, T_{r^k\zeta^s\overline{\zeta}^t}\right]$ has finite rank, so the set
$\left\{\left[T_{r^l\zeta^p\overline{\zeta}^q}, T_{r^k\zeta^s\overline{\zeta}^t}\right](z^\beta):\beta\succeq\gamma\right\}$
contains only finite linearly independent vectors. Thus there exists some $\gamma_0\in\mathbb{N}^n$ such that
$$ H_{s,t,b}(\beta)H_{p,q,a}(\beta+s-t)-H_{p,q,a}(\beta)H_{s,t,b}(\beta+p-q)=0$$ for all $\beta\succeq \gamma_0$.
Clearly, the function
$$ H_{s,t,b}(\xi)H_{p,q,a}(\xi+s-t)-H_{p,q,a}(\xi)H_{s,t,b}(\xi+p-q)$$
is holomorphic and polynomially bounded on $\{\xi\in\mathbb{C}^n: \re \xi_i\geq \gamma_i,1\leq i\leq n\}$. Then according to
\cite[Proposition 3.2]{Le}, we obtain
 \begin{equation}\label{eq8}
   H_{s,t,b}(\xi)H_{p,q,a}(\xi+s-t)-H_{p,q,a}(\xi)H_{s,t,b}(\xi+p-q)=0
\end{equation}
 for all $\xi\in\mathbb{C}^n$ with $\re \xi_i\geq \gamma_i,1\leq i\leq n$.

To simplify our computations later, we define
$$F_i(\eta_i)=
    \frac{
    \Gamma\left(\eta_i+ \frac{p_i}{m_i}\right)
    \Gamma\left( \eta_i+\frac{s_i}{m_i}-\frac{t_i}{m_i}\right)
   \Gamma\left( \eta_i+\frac{p_i}{m_i}-\frac{q_i}{m_i}+\frac{s_i}{m_i}\right)
         }
         {\Gamma\left( \eta_i+\frac{s_i}{m_i}\right)
  \Gamma\left(\eta_i+ \frac{p_i}{m_i}-\frac{q_i}{m_i}\right)
   \Gamma\left( \eta_i+\frac{s_i}{m_i}-\frac{t_i}{m_i}+\frac{p_i}{m_i}\right)}$$
on $\{\eta_i\in\mathbb{C}: \re\eta_i\geq (\gamma_i+1)/m_i\}$ for each $i\in\{1,\cdots,n\}$,
and
$$F(\eta)
 =\frac{\left(\eta+a\right)\left(\eta+b+\mu\right)}
{\left(\eta+b\right)\left(\eta+a+\nu\right)}
\frac{\Gamma\left(\eta+|\widehat{p}|\right)\Gamma\left(\eta+\nu+1\right)\Gamma\left(\eta+\mu+|\widehat{s}|\right)}
{\Gamma\left(\eta+|\widehat{s}|\right)\Gamma\left(\eta+\mu+1\right)\Gamma\left(\eta+\nu+|\widehat{p}|\right)}
$$
on $\{\eta\in\mathbb{C}: \re\eta\geq |\widehat{\gamma+1}|\}$.
Notice that if we replace $\frac{\xi_i+1}{m_i}$ by $\eta_i$ in \eqref{eq7}, then the identity \eqref{eq8}
becomes
 \begin{equation}\label{eq9}
   F(\eta_1+\cdots+\eta_n)=\prod\limits_{i=1}^nF_i(\eta_i), \ \ \re\eta_i\geq(\gamma_i+1)/m_i.
 \end{equation}
Then by Proposition~\ref{uui}, we see that each function $F_i$, $1\leq i\leq n$, should be a constant multiple of the exponential
 function (maybe degenerate to a constant). On the other hand, from the asymptotic expression of the logarithm of gamma function at infinity, it is known that
$$\frac{\Gamma(\eta_i+x)}{\Gamma(\eta_i+y)}=\eta_i^{x-y} \left(1+O\left(\frac{1}{|\eta_i|}\right)\right)$$
for large values of $|\eta_i|$ with $\re\eta_i>0$ and $x,\;y\geq0$.
It is clear that each function $F_i$ is bounded on $\{\eta_i\in\mathbb{C}: \re\eta_i\geq (\gamma_i+1)/m_i\}$. Consequently, we infer that each function $F_i$ is a constant function. Combining this with the definition of $F_i$, we conclude that $F_i(\eta_i)=1$ for each $i\in\{1,\cdots,n\}$.

Observe that $\frac{F_i(\eta_i+1)}{F_i(\eta_i)}=1$. Using the formula $\Gamma\left(\eta_i+1\right)=\eta_i\Gamma\left(\eta_i\right)$, we deduce that
$$\frac{\left(\eta_i+ \frac{p_i}{m_i}\right)
\left( \eta_i+\frac{s_i}{m_i}-\frac{t_i}{m_i}\right)
\left( \eta_i+\frac{p_i}{m_i}-\frac{q_i}{m_i}+\frac{s_i}{m_i}\right)
}{\left( \eta_i+\frac{s_i}{m_i}\right)
\left(\eta_i+ \frac{p_i}{m_i}-\frac{q_i}{m_i}\right)
\left( \eta_i+\frac{s_i}{m_i}-\frac{t_i}{m_i}+\frac{p_i}{m_i}\right)}
=1.$$
This clearly implies that the tuple $(p_i, q_i, s_i, t_i)$ satisfies Condition (I).
Since $F_i(\eta_i)$ is the constant function $1$, it follows from \eqref{eq9} that $F(\eta)=1$, and so
the identity \eqref{eq1} holds on $\{\eta\in\mathbb{C}: \re\eta\geq |\widehat{\gamma+1}|\}$. This
completes the proof that (a) implies (c).

It remains to prove that (c) implies (b). In fact, if condition (c) holds, then $F_i(\eta_i)=1$ and $F(\eta)=1$, and so \eqref{eq9}
holds for $\eta_i$ with $\re\eta_i\geq(\gamma_i+1)/m_i$, $1\leq i\leq n$. Thus,
$$\left[T_{r^l\zeta^p\overline{\zeta}^q}, T_{r^k\zeta^s\overline{\zeta}^t}\right](z^\beta)=0$$
for each $\beta\in\mathbb{N}^n$ with $\beta\succeq\gamma $.
For $\beta\in\mathbb{N}^n$ with $\beta\not\succeq\gamma $, then $\gamma_i>\beta_i\geq0$
for some $i\in\{1,2,\cdots,n\}$. Using \cite[Lemma 1]{DZhu} we have $\gamma_i=-p_i+q_i-s_i+t_i>\beta_i$, and hence Lemma~\ref{and} implies
$$
\left[T_{r^l\zeta^p\overline{\zeta}^q}, T_{r^k\zeta^s\overline{\zeta}^t}\right](z^\beta)=
T_{r^l\zeta^p\overline{\zeta}^q}T_{r^k\zeta^s\overline{\zeta}^t}(z^\beta)
-T_{r^k\zeta^s\overline{\zeta}^t}T_{r^l\zeta^p\overline{\zeta}^q}(z^\beta)=0.
$$
Consequently, $\left[T_{r^l\zeta^p\overline{\zeta}^q}, T_{r^k\zeta^s\overline{\zeta}^t}\right]=0$, as desired.
This completes the proof.
\end{proof}

In the rest of this section, we will prove Theorem~\ref{thm2} stated in the introduction, which completely characterizes all finite rank semi-commutators of two monomial-type Toeplitz operators on $A^2(\Omega^n_m)$. First, we need the following critical lemma.

\begin{lemma}\label{lem5} Let $p,q,s,t\in\mathbb{N}^n$. If there exist some real numbers $a$ and $b$ such that
\begin{equation}\label{eq10}
\frac{\left(\eta+a+b\right)}
{\left(\eta+b\right)
\left(\eta+a+|\widehat{s}|-|\widehat{t}|\right)}
 =\frac{\Gamma\left(\eta+|\widehat{s}|\right)
 \Gamma\left(\eta+|\widehat{s}|-|\widehat{t}|+|\widehat{p}|\right)}
  {\Gamma\left(\eta+|\widehat{s}|-|\widehat{t}|+1\right)
 \Gamma\left(\eta+|\widehat{p}|+|\widehat{s}|\right)}
\end{equation}
hold for any $\eta$ over the right half-plane, then either $|\widehat{t}|=0$ or $|\widehat{p}|=0$.
\end{lemma}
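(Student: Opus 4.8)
The plan is to pin down the four-$\Gamma$ quotient on the right of \eqref{eq10} by exploiting the rigidity of the left-hand side. Write $P=|\widehat p|$, $S=|\widehat s|$, $T=|\widehat t|$; these are nonnegative, with $P=0\iff p=0$ and $T=0\iff t=0$, so it suffices to prove $P=0$ or $T=0$. First I would clear the factor $\Gamma(\eta+S-T+1)=(\eta+S-T)\Gamma(\eta+S-T)$: multiplying \eqref{eq10} by $\eta+S-T$ turns it into
\[
Q(\eta):=\frac{(\eta+a+b)(\eta+S-T)}{(\eta+b)(\eta+a+S-T)}=\frac{\Gamma(\eta+S)\,\Gamma(\eta+S-T+P)}{\Gamma(\eta+S-T)\,\Gamma(\eta+P+S)}=:R(\eta).
\]
A short inspection shows that $Q$ is either identically $1$ (exactly when $a=0$ or $b=S-T$) or a ratio of two coprime monic quadratics; in either case $Q(\eta)\to1$ at infinity, and so does $R$ by Stirling. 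Since $Q$ is rational and agrees with the meromorphic function $R$ on a right half-plane, $R$ is rational; letting $d$ be the number of zeros of $R$ counted with multiplicity, which equals the number of its poles because $R\to1$, we get $d\in\{0,2\}$.

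Next I would use the pole structure of $\Gamma$. As $\Gamma$ has simple poles precisely at the nonpositive integers and no zeros, the poles of $R$ lie on the sequences $-S-\N$ and $-(S-T+P)-\N$ and must be cancelled, off a finite set, by the zeros of $R$, which lie on $-(S-T)-\N$ and $-(S+P)-\N$; this forces the multiset $\{S,\,S-T+P\}$ to agree with $\{S-T,\,S+P\}$ modulo $\Z$, hence $T\in\Z$ or $P\in\Z$. In either case one of the two quotients $\Gamma(\eta+S)/\Gamma(\eta+S-T)$ and $\Gamma(\eta+S-T+P)/\Gamma(\eta+P+S)$ collapses to a finite product of linear factors with consecutive roots and the other to the reciprocal of such a product; after cancelling common factors $R$ becomes $\dfrac{(\eta+c)(\eta+c+1)\cdots(\eta+c+d-1)}{(\eta+c')(\eta+c'+1)\cdots(\eta+c'+d-1)}$, the two runs of roots being disjoint.

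Now I would split on $d$. If $d=2$, then $Q=R$ has two distinct zeros and two distinct poles, so the numerator and denominator of $Q$ are coprime with distinct roots, and matching them with those of $R$ gives $\{a+b,\,S-T\}=\{c,\,c+1\}$ and $\{b,\,a+S-T\}=\{c',\,c'+1\}$. Summing each of these two relations and using $(a+b)+(S-T)=b+(a+S-T)$ yields $2c+1=2c'+1$, i.e.\ $c=c'$, contradicting the disjointness of the two runs. Hence $d=0$, i.e.\ $R\equiv1$, which means $\Gamma(\eta+S)/\Gamma(\eta+S-T)=\Gamma(\eta+S+P)/\Gamma(\eta+S-T+P)$, so the function $w\mapsto\Gamma(w+S)/\Gamma(w+S-T)$ has period $P$. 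If $P>0$ this is impossible unless $T=0$, because by Stirling this function grows like $w^{T}$ as $\re w\to+\infty$ and hence is unbounded on a right half-plane on which it is analytic. Therefore $P=0$ or $T=0$, which is exactly the assertion.

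The step demanding the most care is the collapse of the two $\Gamma$-quotients and the verification that, after cancelling their common factors, what is left really is a ratio of two disjoint runs of equally many consecutive linear factors: when $P$ and $T$ are both positive integers the two finite products partially cancel one another, so in each configuration one must track which factors survive and check that the surviving numerator and denominator are again runs of consecutive roots. Once that is in place, the dichotomy $d\in\{0,2\}$ together with the two short arguments above finishes the proof.
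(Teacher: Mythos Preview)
Your argument is correct and takes a genuinely different route from the paper's. Both proofs begin the same way: rewriting \eqref{eq10} (after absorbing the factor $\eta+S-T$) as $Q(\eta)=R(\eta)$ and observing that rationality of the left-hand side forces $T\in\mathbb Z$ or $P\in\mathbb Z$ via the pole lattice of $\Gamma$. From there the paper proceeds by a case analysis on the size of the integer parameter: the cases $T=1,2$ are disposed of by an ad hoc ``elementary argument'', and for $T\ge 3$ the paper matches two specific extreme roots of the numerator and denominator of the product form \eqref{eq11} against the factors of the left side, solves for $(a,b)$ in two subcases, substitutes back, and locates a pole that cannot be matched. Your approach replaces all of this with a single invariant: for $Q$ one always has (sum of zeros) $=$ (sum of poles) $=a+b+S-T$, whereas for a ratio of two disjoint length-$d$ runs $\prod_{j=0}^{d-1}(\eta+c+j)\big/\prod_{j=0}^{d-1}(\eta+c'+j)$ the difference of these sums is $d(c-c')$. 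Since $c\neq c'$, this forces $d=0$, and then the periodicity/Stirling step finishes uniformly. The gain is that you never split on the value of $T$ and you never have to chase individual roots; the cost is that you must actually verify the ``run'' structure you invoke, which you flag in your last paragraph. That verification is routine: if $T\in\mathbb Z_{>0}$ one has
\[
R(\eta)=\frac{\prod_{j=0}^{T-1}(\eta+S-T+j)}{\prod_{j=0}^{T-1}(\eta+S-T+P+j)},
\]
and when $P\notin\mathbb Z$ the two runs are disjoint as written, while when $P\in\{1,\dots,T-1\}$ the common block $\{S-T+P,\dots,S-1\}$ cancels to leave the runs $\{S-T,\dots,S-T+P-1\}$ and $\{S,\dots,S+P-1\}$, and when $P\ge T$ there is no overlap; the case $P\in\mathbb Z_{>0}$ is symmetric. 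In every case the surviving numerator and denominator are disjoint consecutive runs of the same length $d=\min(P,T)$ (or $d=T$, resp.\ $d=P$, when only one of the two is an integer), so your sum-of-roots argument applies verbatim.
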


\begin{proof}
Notice that the function on the right-hand side above is a rational function, so it has at most finitely many poles.
Since the function $\Gamma\left(\eta+|\widehat{s}|\right)$ has infinitely many poles, all but finitely many of them must be canceled.
Therefore, either $|\widehat{t}|$ or $|\widehat{p}|$ is a nonnegative integer.

First, we assume that $|\widehat{t}|$ is a positive integer. We need to show that $|\widehat{p}|=0$.  
If $|\widehat{t}|=1$ or $|\widehat{t}|=2$, then an elementary argument shows that 
either $|\widehat{p}|=a=0$ or $|\widehat{p}|=b-|\widehat{s}|+|\widehat{t}|=0$, as desired. So we consider $|\widehat{t}|\geq3$.
Then \eqref{eq10} becomes
\begin{align}\label{eq11}
& \frac{\left(\eta+a+b\right)}
{\left(\eta+b\right)\left(\eta+a+|\widehat{s}|-|\widehat{t}|\right)}\nonumber\\
&=\frac{\left(\eta+|\widehat{s}|-|\widehat{t}|+1\right)\left(\eta+|\widehat{s}|-|\widehat{t}|+2\right)\cdots\left(\eta+|\widehat{s}|-1\right)}
{\left(\eta+|\widehat{p}|+|\widehat{s}|-|\widehat{t}|\right)\cdots
\left(\eta+|\widehat{p}|+|\widehat{s}|-2\right)\left(\eta+|\widehat{p}|+|\widehat{s}|-1\right)}.
\end{align}
Since $|\widehat{t}|\neq0$, it is clear that $|\widehat{p}|\neq1$. To see that $|\widehat{p}|=0$, let us assume that $|\widehat{p}|>0$ and $|\widehat{p}|\neq1$.
Observe that $|\widehat{s}|-|\widehat{t}|+1\neq |\widehat{p}|+|\widehat{s}|-|\widehat{t}|+i$ for any $i\in\{0,\cdots,|\widehat{t}|-1\}$ and $|\widehat{p}|+|\widehat{s}|-1>|\widehat{s}|-1$.
It follows from \eqref{eq11} that
 $$|\widehat{s}|-|\widehat{t}|+1=a+b$$ and
$$|\widehat{p}|+|\widehat{s}|-1=b, \ or\  |\widehat{p}|+|\widehat{s}|-1=a+|\widehat{s}|-|\widehat{t}|.$$
Consequently, one of the following conditions holds:
\begin{itemize}
  \item $a=-|\widehat{p}|-|\widehat{t}|+2$ and $b=|\widehat{p}|+|\widehat{s}|-1$.
  \item $a=|\widehat{p}|+|\widehat{t}|-1$ and $b=-|\widehat{p}|+|\widehat{s}|-2|\widehat{t}|+2$.
\end{itemize}
In each case, \eqref{eq11} becomes
\begin{align*}
&\frac{1}
{\left(\eta-|\widehat{p}|+|\widehat{s}|-2|\widehat{t}|+2\right)}\\
&=\frac{\left(\eta+|\widehat{s}|-|\widehat{t}|+2\right)\cdots\left(\eta+|\widehat{s}|-1\right)}
{\left(\eta+|\widehat{p}|+|\widehat{s}|-|\widehat{t}|\right)
\left(\eta+|\widehat{p}|+|\widehat{s}|-|\widehat{t}|+1\right)\cdots\left(\eta+|\widehat{p}|+|\widehat{s}|-2\right)}.
\end{align*}
However, $$-|\widehat{p}|+|\widehat{s}|-2|\widehat{t}|+2\neq|\widehat{p}|+|\widehat{s}|-|\widehat{t}|+j$$ for all $j\in\{0,\cdots,|\widehat{t}|-2\}$ since $|\widehat{t}|\geq3$ and $|\widehat{p}|>0$, a contradiction. This shows that $|\widehat{p}|=0$.

Next, we suppose $|\widehat{p}|$ is a positive integer.
Then by the same way as shown before, we get that $|\widehat{t}|=0$.
This completes the proof.
\end{proof}

\begin{proof}[Proof of Theorem~\ref{thm2}]
It is trivial that (b) implies (a).

To prove that (a) implies (c), we consider each $\beta\in\mathbb{N}^n$
with $\beta\succeq\delta$, where $$\delta_i=\max\{0,-s_i+t_i,-p_i+q_i-s_i+t_i\}.$$ Then
we deduce from Lemma~\ref{and} and the notation of \eqref{eq7} that
\begin{align*}
&\left(T_{r^l\zeta^p\overline{\zeta}^q}, T_{r^k\zeta^s\overline{\zeta}^t}\right](z^\beta)\\
&=\left[ H_{s,t,b}(\beta)H_{p,q,a}(\beta+s-t)-H_{p+s,q+t,a+b}(\beta)\right](z^{\beta+p-q+s-t}).
\end{align*}
Since $\left(T_{r^l\zeta^p\overline{\zeta}^q}, T_{r^k\zeta^s\overline{\zeta}^t}\right]$ has finite rank, we can
proceed as in the proof of Theorem~\ref{thm1} to obtain
\begin{equation}\label{eq12}
  H_{s,t,b}(\xi)H_{p,q,a}(\xi+s-t)-H_{p+s,q+t,a+b}(\xi)=0
\end{equation}
for all $\xi\in\mathbb{C}^n$ with $\re\xi_i\geq\delta_i, 1\leq i\leq n$.
If we define
$$G_i(\eta_i)=
  \frac{\Gamma\left( \eta_i+\frac{s_i}{m_i}-\frac{t_i}{m_i}\right)
    \Gamma\left( \eta_i+\frac{p_i}{m_i}+\frac{s_i}{m_i}\right)}
  {\Gamma\left( \eta_i+\frac{s_i}{m_i}\right)
   \Gamma\left( \eta_i+\frac{s_i}{m_i}-\frac{t_i}{m_i}+\frac{p_i}{m_i}\right)}
$$
on $\{\eta_i\in\mathbb{C}: \re\eta_i\geq (\delta_i+1)/m_i\}$ for each $i\in\{1,\cdots,n\}$ and
$$G(\eta)=
\frac{\left(\eta+a+b\right)\Gamma\left(\eta+\nu+1\right)\Gamma\left(\eta+|\widehat{p}|+|\widehat{s}|\right)}
{\left(\eta+b\right)\left(\eta+a+\nu\right)\Gamma\left(\eta+|\widehat{s}|\right)\Gamma\left(\eta+\nu+|\widehat{p}|\right)}
$$
on $\{\eta\in\mathbb{C}: \re\eta\geq |\widehat{\delta+1}|\}$, then \eqref{eq12} is equivalent to
\begin{equation*}
   G(\eta_1+\cdots+\eta_n)=\prod\limits_{i=1}^n G(\eta_i)
\end{equation*}
for all $\eta_i\in\mathbb{C}$ with $\re \eta_i\geq (\delta_i+1)/m_i,1\leq i\leq n$.
Using the same argument as in the proof of Theorem~\ref{thm1}, we have that $G(\eta)=1$. Then by Lemma~\ref{lem5}, we get either $|\widehat{t}|=0$ or $|\widehat{p}|=0$.
If $|\widehat{t}|=0$, then it follows from $G(\eta)=1$ that
$$\left(\eta+a+b\right)\left(\eta+|\widehat{s}|\right)
=\left(\eta+b\right)\left(\eta+a+|\widehat{s}|\right),$$ which implies that either $a=0$ or $b=|\widehat{s}|$. Similarly, if $|\widehat{p}|=0$, then we have
either $a=0$ or $b=\nu$. This
completes the proof that (a) implies (c).

To prove that (c) implies (b), we first observe that condition (c) implies that either $p=0$ or $t=0$, and consequently,
$G_i(\eta_i)=1$ on $\{\eta_i\in\mathbb{C}: \re\eta_i\geq (\delta_i+1)/m_i\}$ for each $i\in\{1,\cdots,n\}$. It is also easy to check that $G(\eta)=1$, and so \eqref{eq12}
holds for for all $\zeta\in\mathbb{C}^n$ with $\re\zeta_i\geq\delta_i, 1\leq i\leq n$. Thus,
$$\left(T_{r^l\zeta^p\overline{\zeta}^q}, T_{r^k\zeta^s\overline{\zeta}^t}\right](z^\beta)=0$$
for each $\beta\in\mathbb{N}^n$ with $\beta\succeq\delta$.
For $\beta\in\mathbb{N}^n$ with $\beta\not\succeq\delta$, then $\delta_{i_0}>\beta_{i_0}\geq0$
for some $i_0\in\{1,2,\cdots,n\}$. As in the proof of Theorem~\ref{thm1}, condition (b) will follow if we can show that
$$\delta_{i_0}=-p_{i_0}+q_{i_0}-s_{i_0}+t_{i_0}.$$
To this end, first observe that either $p_{i_0}=0$ or $t_{i_0}=0$. If $p_{i_0}=0$, then it is obvious that $$-s_{i_0}+t_{i_0}\leq-p_{i_0}+q_{i_0}-s_{i_0}+t_{i_0}.$$
Since $\delta_{i_0}>0$, the desired result then follows from the definition of $\delta_{i_0}$. If $t_{i_0}=0$, then $\delta_{i_0}=\max\{0,-s_{i_0},-p_{i_0}+q_{i_0}-s_{i_0}\}$.
Since $\delta_{i_0}>0$, the desired result is then obvious.
This completes the proof.
\end{proof}

\section{Corollaries and examples}

In this section we observe some interesting consequences of Theorem~\ref{thm1}. More specifically,
we will give some sufficient conditions and necessary conditions for two monomial-type Toeplitz operators $T_{r^l\zeta^p\overline{\zeta}^q}$ and $T_{r^k\zeta^s\overline{\zeta}^t}$ commuting on $A^2(\Omega_m^n)$. For the convenience of our proofs, we rewrite \eqref{eq1} as
\begin{align}\label{eq13}
&\frac{\Gamma\left(\eta+|\widehat{p}|\right)
\Gamma\left(\eta+|\widehat{s}|-|\widehat{t}|+1\right)
\Gamma\left(\eta+|\widehat{p}|-|\widehat{q}|+|\widehat{s}|\right)}{\Gamma\left(\eta+|\widehat{s}|\right)
 \Gamma\left(\eta+|\widehat{p}|-|\widehat{q}|+1\right)
\Gamma\left(\eta+|\widehat{s}|-|\widehat{t}|+|\widehat{p}|\right)}\nonumber\\
 &=\frac{\left(\eta+\frac{k}{2}+\frac{|\widehat{s}|}{2}-\frac{|\widehat{t}|}{2}\right)
\left(\eta+\frac{l}{2}+|\widehat{s}|-|\widehat{t}|+\frac{|\widehat{p}|}{2}
-\frac{|\widehat{q}|}{2}\right)}{\left(\eta+\frac{l}{2}+\frac{|\widehat{p}|}{2}-\frac{|\widehat{q}|}{2}\right)
\left(\eta+\frac{k}{2}+|\widehat{p}|-|\widehat{q}|+\frac{|\widehat{s}|}{2}
-\frac{|\widehat{t}|}{2}\right)}.
\end{align}

As a direct consequence of Theorem~\ref{thm1}, we first present five cases for two monomial-type
Toeplitz operators commuting on $A^2(\Omega_m^n)$, which correspond to \cite[Theorem A]{DZhu}.

\begin{corollary}\label{cor five} If one of the following conditions holds, then the operators $T_{r^l\zeta^p\overline{\zeta}^q}$ and $T_{r^k\zeta^s\overline{\zeta}^t}$ commute on $A^2(\Omega_m^n)$:
\begin{itemize}
\item[(c1)] $l=|\widehat{p}|=|\widehat{q}|=0$ or $k=|\widehat{s}|=|\widehat{t}|=0$,
            either of which means that one of the two operators is the identity operator.
\item[(c2)] $|\widehat{q}|=|\widehat{t}|=0$, $l=|\widehat{p}|$ and $k=|\widehat{t}|$,
            which means that both operators have analytic symbols.
\item[(c3)] $|\widehat{p}|=|\widehat{s}|=0$, $l=|\widehat{q}|$ and $k=|\widehat{s}|$, which means that both operators have
             conjugate analytic symbols.
\item[(c4)] $\left(p_i, q_i, s_i, t_i\right)$ satisfies Condition (I) for each $i\in\{1,2,\cdots,n\}$,
           $|\widehat{p}|=|\widehat{q}|$ and $|\widehat{s}|=|\widehat{t}|$.
\item[(c5)] $\left(p_i, q_i, s_i, t_i\right)$ satisfies Condition (I) for each $i\in\{1,2,\cdots,n\}$,
           $|\widehat{p}|=|\widehat{s}|$ , $|\widehat{q}|=|\widehat{t}|$, and $l=k$.
\end{itemize}
\end{corollary}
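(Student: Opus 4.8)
The plan is to obtain all five implications from the equivalence (b)$\Leftrightarrow$(c) of Theorem~\ref{thm1}. Thus, in each of (c1)--(c5) it suffices to produce real numbers $\mu,\nu$ and $a\ge\mu/2$, $b\ge\nu/2$ for which $(p_i,q_i,s_i,t_i)$ satisfies Condition (I) for every $i\in\{1,\dots,n\}$ and the identity \eqref{eq1} (equivalently \eqref{eq13}) holds on some right half-plane. Guided by the closing assertion of Theorem~\ref{thm1}, we take
\[
\mu=|\widehat{p}|-|\widehat{q}|,\qquad \nu=|\widehat{s}|-|\widehat{t}|,\qquad a=\frac{l}{2}+\frac{\mu}{2},\qquad b=\frac{k}{2}+\frac{\nu}{2}
\]
in every case; since $l,k\ge 0$, the inequalities $a\ge\mu/2$ and $b\ge\nu/2$ hold automatically, so the only thing that really has to be checked in each case is Condition (I) together with \eqref{eq1} for these values.

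I would dispatch the three ``predictable'' cases (c1)--(c3) directly, without even invoking \eqref{eq1}. In (c1) the hypothesis forces one of the two symbols to be the constant $1$, so the corresponding operator is the identity and commutes with everything. In (c2) the hypothesis makes both symbols holomorphic monomials $z^p$ and $z^s$, bounded on $\Omega_m^n$, so the operators are the multiplications $M_{z^p}$ and $M_{z^s}$, which commute. In (c3), symmetrically, both symbols are anti-holomorphic, so the operators are $T_{z^q}^{*}$ and $T_{z^t}^{*}$, and $T_{z^q}^{*}T_{z^t}^{*}=(T_{z^t}T_{z^q})^{*}=(T_{z^q}T_{z^t})^{*}=T_{z^t}^{*}T_{z^q}^{*}$. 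For uniformity one may alternatively note that (c1)--(c3) all satisfy condition (c) of Theorem~\ref{thm1}: Condition (I) holds coordinatewise because enough of the exponents vanish, and with the stated relations both sides of \eqref{eq13} are immediately seen to coincide as rational functions of $\eta$.

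The substantive cases are (c4) and (c5). In (c4) the hypotheses $|\widehat{p}|=|\widehat{q}|$ and $|\widehat{s}|=|\widehat{t}|$ give $\mu=\nu=0$, whereupon the gamma factors on the left of \eqref{eq1} cancel in pairs and the two linear factors on the right also cancel, so \eqref{eq1} reduces to $1=1$; Condition (I) is part of the hypothesis. In (c5) the hypotheses $|\widehat{p}|=|\widehat{s}|$, $|\widehat{q}|=|\widehat{t}|$ and $l=k$ force $\mu=\nu$ and $a=b$, and again both sides of \eqref{eq1} collapse to $1$ after the obvious cancellations; Condition (I) is again hypothesized. Hence in every one of the five cases condition (c) of Theorem~\ref{thm1} holds, and therefore so does (b), i.e.\ the two operators commute.

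I do not expect a genuine obstacle. The only points requiring a little care are confirming, from the last part of Theorem~\ref{thm1}, that $\mu,\nu,a,b$ are pinned down to the displayed values (so that the choices used in (c4) and (c5) are legitimate), and, in (c2)--(c3), carrying out the short gamma-function bookkeeping showing that the left-hand side of \eqref{eq13} agrees with the right-hand side after substituting the stated powers. Everything else is immediate.
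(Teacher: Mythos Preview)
Your proposal is correct and matches the paper's approach: the paper states this corollary as ``a direct consequence of Theorem~\ref{thm1}'' without giving a proof, and your plan---verifying that each of (c1)--(c5) forces condition (c) of Theorem~\ref{thm1} with the canonical choices $\mu=|\widehat{p}|-|\widehat{q}|$, $\nu=|\widehat{s}|-|\widehat{t}|$, $a=(l+\mu)/2$, $b=(k+\nu)/2$---is exactly the intended verification. Your cancellations in (c4) and (c5) are correct, and your direct arguments for (c1)--(c3) are fine shortcuts.
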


It is also worth to mention that except the above predictable sufficient conditions for two monomial-type
Toeplitz operators commuting on $A^2(\Omega_m^n)$, there are exactly many other cases. Next, we present some non-trivial examples of commuting monomial-type
Toeplitz operators on $A^2(\Omega_m^n)$.

\begin{example}\label{ex nontri}
Let $\left(p_i, q_i, s_i, t_i\right)$ satisfy Condition (I) for each $i\in\{1,2,\cdots,n\}$, and let
\begin{align*}
\left\{
{
\begin{array}{*{18}c}
|\widehat{p}|=2,\\|\widehat{q}|=1,\\|\widehat{s}|=2|\widehat{t}|,\\
l=2|\widehat{t}|-1,\\
k=|\widehat{t}|,\\
\end{array}
}
\right.
or \ \left\{
{
\begin{array}{*{18}c}
|\widehat{p}|=2,\\|\widehat{q}|=1,\\|\widehat{s}|=2|\widehat{t}|,\\
l=2|\widehat{t}|+1,\\
k=3|\widehat{t}|,
\end{array}
}
\right.
or \ \left\{
{
\begin{array}{*{18}c}
|\widehat{q}|=2,\\|\widehat{t}|=1,\\|\widehat{p}|=2|\widehat{s}|,\\
l=4|\widehat{s}|-2,\\
k=3|\widehat{s}|-1.
\end{array}
}
\right.
\end{align*}
Then it is easy to check that $T_{r^l\zeta^p\overline{\zeta}^q}$ commute with $T_{r^k\zeta^s\overline{\zeta}^t}$
on $A^2(\Omega_m^n)$ in each of the cases above. More specifically, if we define $p, q, s, t\in\N^3$ by
\begin{align*}
    p=(m_1,m_2,0),\ q=(m_1,0,0),\
    s=(2m_1,2m_2,4m_3),\
    t=(2m_1,0,2m_3),
\end{align*}
then $(|\widehat{p}|,|\widehat{q}|,|\widehat{s}|,|\widehat{t}|)=(2,1,8,4)$, which does not satisfy Corollary~\ref{cor five}.
Obviously, $T_{r^7\zeta^p\overline{\zeta}^q}$ commutes with $T_{r^{4}\zeta^s\overline{\zeta}^t}$, and $T_{r^9\zeta^p\overline{\zeta}^q}$ commutes with $T_{r^{12}\zeta^s\overline{\zeta}^t}$ on $A^2(\Omega_m^3)$.
\end{example}

If the weakly pseudoconvex domain is not the unit ball, we can construct the following more interesting example.

\begin{example}\label{ex nontri2} Suppose $m=(4,\cdots,4)\in\mathbb{N}^6$ and
\begin{align*}
    p&=(0,2,0,1,1,4),\\
    q&=(0,1,1,0,1,1),\\
    s&=(2,0,0,8,2,4),\\
    t&=(3,0,2,0,2,1).
\end{align*}
Then $(|\widehat{p}|,|\widehat{q}|,|\widehat{s}|,|\widehat{t}|)=(2,1,4,2)$ and the tuple $\left(p_{i}, q_{i}, s_{i}, t_{i}\right)$ satisfies Condition (I) for all $i\in\{1,\cdots,6\}$. As a consequence
of Example~\ref{ex nontri}, we see that the operators $T_{z^p\overline{z}^q}$ and $T_{\zeta^s\overline{z}^t}$ commute
on $A^2(\Omega_m^6)$.
\end{example}

Next, we give some interesting necessary conditions for two monomial-type Toeplitz operators commuting on $A^2(\Omega_m^n)$.

\begin{corollary}\label{cor nec}
Let $l,k\in\mathbb{R}_+$, $p,q,s,t\in\mathbb{N}^n$. If the operators $T_{r^l\zeta^p\overline{\zeta}^q}$ and $T_{r^k\zeta^s\overline{\zeta}^t}$ commute on $A^2(\Omega_m^n)$, then the following statements hold:
\begin{enumerate}
  \item[(a)] At least one of the tuple $\left(|\widehat{p}|,|\widehat{q}|\right)$, $\left(|\widehat{s}|,|\widehat{t}|\right)$, $\left(|\widehat{p}|,|\widehat{s}|\right)$, $\left(|\widehat{q}|,|\widehat{t}|\right)$, $\left(|\widehat{p}|-|\widehat{q}|, |\widehat{s}|-|\widehat{t}|\right)$, and $\left(|\widehat{p}|-|\widehat{s}|, |\widehat{q}|-|\widehat{t}|\right)$ belongs to $\mathbb{Z}^2$.
  \item[(b)] The tuple $(l,k,|\widehat{p}|,|\widehat{q}|,|\widehat{s}|,|\widehat{t}|)$ must satisfy the following equation:
\begin{align}\label{eq14}
&\frac{\left(\eta+|\widehat{p}|\right)
\left(\eta+|\widehat{s}|-|\widehat{t}|+1\right)
\left(\eta+|\widehat{p}|-|\widehat{q}|+|\widehat{s}|\right)}
{\left(\eta+|\widehat{s}|\right)\left(\eta+|\widehat{p}|-|\widehat{q}|+1\right)
\left(\eta+|\widehat{s}|-|\widehat{t}|+|\widehat{p}|\right)}\nonumber\\
&=\frac{\left(\eta+b+1\right)\left(\eta+a+|\widehat{s}|-|\widehat{t}|+1\right)\left(\eta+a\right)\left(\eta+b+|\widehat{p}|-|\widehat{q}|\right)}
{\left(\eta+b\right)\left(\eta+a+|\widehat{s}|-|\widehat{t}|\right)\left(\eta+a+1\right)\left(\eta+b+|\widehat{p}|-|\widehat{q}|+1\right)},
 \end{align}
where $a=\left(l+|\widehat{p}|-|\widehat{q}|\right)/2$ and $ b=\left(k+|\widehat{s}|-|\widehat{t}|\right)/2$.
\end{enumerate}
\end{corollary}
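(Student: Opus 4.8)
\textbf{Proof proposal for Corollary~\ref{cor nec}.}

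The plan is to extract both conclusions from the identity \eqref{eq1} (equivalently \eqref{eq13}) furnished by Theorem~\ref{thm1}(c), which must hold on a right half-plane once the two operators commute. For part (b), the strategy is purely bookkeeping: by Theorem~\ref{thm1} there exist real numbers $\mu,\nu,a,b$ with $|\widehat{q}|=|\widehat{p}|-\mu$, $|\widehat{t}|=|\widehat{s}|-\nu$, $l=2a-\mu$, $k=2b-\nu$, so that $a=(l+|\widehat{p}|-|\widehat{q}|)/2$ and $b=(k+|\widehat{s}|-|\widehat{t}|)/2$. I would first replace $\eta$ by $\eta+1$ in \eqref{eq13} and take the ratio of the resulting identity with \eqref{eq13} itself; since $\Gamma(\eta+x+1)/\Gamma(\eta+x)=\eta+x$, every Gamma factor collapses to a linear factor, turning the left-hand side of \eqref{eq13} into a ratio of three linear factors over three linear factors, and likewise for the right-hand side. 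Matching this against the identity obtained the same way from \eqref{eq13} (or directly re-deriving it) yields exactly \eqref{eq14}. This step is routine algebra; the only care needed is to track which shifted arguments appear, and to note that this ratio trick is legitimate because both sides of \eqref{eq13} are equal as meromorphic functions on the half-plane, hence their shift-ratios are equal wherever defined.

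For part (a), the key observation is that the left-hand side of \eqref{eq13} is a ratio of products of Gamma functions, while the right-hand side is a \emph{rational} function of $\eta$. A ratio of Gamma functions $\Gamma(\eta+x)/\Gamma(\eta+y)$ is rational in $\eta$ if and only if $x-y\in\mathbb{Z}$. More precisely, the numerator $\Gamma(\eta+|\widehat{p}|)\Gamma(\eta+|\widehat{s}|-|\widehat{t}|+1)\Gamma(\eta+|\widehat{p}|-|\widehat{q}|+|\widehat{s}|)$ has poles along three arithmetic progressions, and the denominator $\Gamma(\eta+|\widehat{s}|)\Gamma(\eta+|\widehat{p}|-|\widehat{q}|+1)\Gamma(\eta+|\widehat{s}|-|\widehat{t}|+|\widehat{p}|)$ supplies three progressions of zeros; for the quotient to be rational, all but finitely many numerator poles must be cancelled by denominator zeros. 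Since two arithmetic progressions with common difference $1$ share infinitely many points precisely when their offsets differ by an integer, this forces: for each of the three numerator parameters $|\widehat{p}|$, $|\widehat{s}|-|\widehat{t}|+1$, $|\widehat{p}|-|\widehat{q}|+|\widehat{s}|$, its difference with \emph{some} denominator parameter $|\widehat{s}|$, $|\widehat{p}|-|\widehat{q}|+1$, $|\widehat{s}|-|\widehat{t}|+|\widehat{p}|$ must lie in $\mathbb{Z}$. Enumerating the possible pairings and simplifying each resulting difference, one obtains that at least one of $|\widehat{p}|-|\widehat{s}|$, $|\widehat{p}|-|\widehat{q}|-|\widehat{s}|+|\widehat{t}|-1$, $|\widehat{s}|-|\widehat{t}|-|\widehat{p}|+|\widehat{q}|$, $|\widehat{s}|-|\widehat{t}|-|\widehat{p}|+1+|\widehat{p}|-|\widehat{q}|$, etc., is an integer; after absorbing the constant shifts these collapse to the statement that one of $|\widehat{p}|-|\widehat{s}|$, $|\widehat{q}|-|\widehat{t}|$, $|\widehat{p}|-|\widehat{q}|-|\widehat{s}|+|\widehat{t}|$ is in $\mathbb{Z}$. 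I would then pair this with Theorem~\ref{thm1}'s conclusion $(p_i,q_i,s_i,t_i)\in$ Condition~(I): condition (I) at each coordinate forces certain coordinatewise equalities among $p_i,q_i,s_i,t_i$, which combine to give one of the clean relations $|\widehat{p}|=|\widehat{q}|$, $|\widehat{s}|=|\widehat{t}|$, $|\widehat{p}|=|\widehat{s}|$, $|\widehat{q}|=|\widehat{t}|$, or $|\widehat{p}|-|\widehat{q}|=\pm(|\widehat{s}|-|\widehat{t}|)$ being rationally constrained; translating the integrality conclusion through these options produces the six listed tuples in $\mathbb{Z}^2$.

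The main obstacle I anticipate is the case analysis in part (a): there are three numerator Gamma factors and three denominator ones, so a priori $3^3$ ways to cancel infinitely many poles, and one must check that every admissible pairing pattern — including the degenerate ones where two numerator factors are both absorbed by a single denominator factor, or where $|\widehat{t}|$ or $|\widehat{p}|-|\widehat{q}|$ is already a small nonnegative integer so that no cancellation is even needed — leads to one of the six stated membership conclusions, and that no pattern leads to something weaker. Handling the degenerate sub-cases cleanly (e.g.\ when $|\widehat{t}|$ is a small positive integer, so $\Gamma(\eta+|\widehat{s}|-|\widehat{t}|+1)/\Gamma(\eta+|\widehat{s}|)$ is already a polynomial) will require the same elementary-argument care as in Lemma~\ref{lem5}, but the bookkeeping is finite and the underlying principle — ``a ratio of Gammas is rational iff the parameters differ by integers'' — does all the real work. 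Part (b), by contrast, I expect to be immediate once the shift-ratio computation is written down.
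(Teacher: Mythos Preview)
Your treatment of part~(b) is exactly the paper's: shift $\eta\mapsto\eta+1$ in \eqref{eq13}, use $\Gamma(\eta+x+1)=(\eta+x)\Gamma(\eta+x)$, and divide the shifted identity by the original to obtain \eqref{eq14}.

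For part~(a), your underlying idea---that the Gamma ratio on the left of \eqref{eq13} must be rational since the right side is, hence certain parameter differences must be integers---is correct and can be made into a proof. However, the step where you ``pair this with Condition~(I)'' is a genuine gap. Condition~(I) holds separately at each coordinate $i$, and different coordinates may satisfy different clauses (e.g.\ $p_1=q_1=0$ at $i=1$ but $p_2=s_2,\,q_2=t_2$ at $i=2$), so summing over $i$ yields no clean relation among $|\widehat{p}|,|\widehat{q}|,|\widehat{s}|,|\widehat{t}|$. The paper does not use Condition~(I) anywhere in proving~(a), and it cannot do the work you assign it.

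The correct way to finish your argument is this: write the three numerator parameters as $A_1=|\widehat{p}|$, $A_2=|\widehat{s}|-|\widehat{t}|+1$, $A_3=|\widehat{p}|-|\widehat{q}|+|\widehat{s}|$ and the three denominator parameters as $B_1=|\widehat{s}|$, $B_2=|\widehat{p}|-|\widehat{q}|+1$, $B_3=|\widehat{s}|-|\widehat{t}|+|\widehat{p}|$. Rationality of $\prod\Gamma(\eta+A_i)/\prod\Gamma(\eta+B_j)$ forces the multisets $\{A_i\bmod\mathbb{Z}\}$ and $\{B_j\bmod\mathbb{Z}\}$ to coincide, i.e.\ there is a permutation $\sigma$ of $\{1,2,3\}$ with $A_i-B_{\sigma(i)}\in\mathbb{Z}$ for every $i$. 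A direct check shows that the six permutations yield, respectively, the six tuples listed in~(a) lying in $\mathbb{Z}^2$. This replaces your $3^3$ case analysis by a six-case one, and Condition~(I) never enters.

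The paper's own route is different: it argues by contradiction, asserting that if none of the six tuples lies in $\mathbb{Z}^2$ then both sides of \eqref{eq13} have no zero, whence the rational right side is identically $1$; equating the two quadratic factors then gives $k=l$ and $|\widehat{p}|-|\widehat{q}|=|\widehat{s}|-|\widehat{t}|$, and substituting back into \eqref{eq13} produces a contradiction. Your direct approach (once repaired as above) is arguably more transparent, since it exhibits the exact correspondence between the six permutations and the six tuples.
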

\begin{proof} To prove part (a), let us assume the contrary. Then the analytic function on the right-hand side of \eqref{eq13} has no zero on the complex plane.
However, if the left-hand side of \eqref{eq13} has no zero, then it follows
\begin{align*}
&\left(\eta+\frac{k}{2}+\frac{|\widehat{s}|}{2}-\frac{|\widehat{t}|}{2}\right)
\left(\eta+\frac{l}{2}+|\widehat{s}|-|\widehat{t}|+\frac{|\widehat{p}|}{2}
-\frac{|\widehat{q}|}{2}\right)\\
&=\left(\eta+\frac{l}{2}+\frac{|\widehat{p}|}{2}-\frac{|\widehat{q}|}{2}\right)
\left(\eta+\frac{k}{2}+|\widehat{p}|-|\widehat{q}|+\frac{|\widehat{s}|}{2}
-\frac{|\widehat{t}|}{2}\right).
\end{align*}
From the assumption $\left(|\widehat{p}|-|\widehat{q}|, |\widehat{s}|-|\widehat{t}|\right)\notin\mathbb{Z}^2$ we then deduce that $k=l$ and
\begin{equation}\label{eq15}
    |\widehat{p}|-|\widehat{q}|=|\widehat{s}|-|\widehat{t}|
\end{equation}
Then the identity \eqref{eq13} becomes
$$\Gamma\left(\eta+|\widehat{p}|\right)
\Gamma\left(\eta+|\widehat{p}|-|\widehat{q}|+|\widehat{s}|\right)
=\Gamma\left(\eta+|\widehat{s}|\right)
\Gamma\left(\eta+|\widehat{s}|-|\widehat{t}|+|\widehat{p}|\right).$$
This together with \eqref{eq15} shows that either $|\widehat{p}|-|\widehat{q}|=|\widehat{s}|-|\widehat{t}|=0$ or
           $|\widehat{p}|-|\widehat{s}|=|\widehat{q}|-|\widehat{t}|=0$, which contradicts with the assumption and
completes the proof of the condition (a).

To prove part (b), we can replace $\eta$ by $\eta+1$ in \eqref{eq13} and apply the formula $\Gamma\left(\eta+1\right)=\eta\Gamma\left(\eta\right)$ to obtain
\begin{align*}
&\frac{\Gamma\left(\eta+|\widehat{p}|\right)
\Gamma\left(\eta+|\widehat{s}|-|\widehat{t}|+1\right)
\Gamma\left(\eta+|\widehat{p}|-|\widehat{q}|+|\widehat{s}|\right)}
{\Gamma\left(\eta+|\widehat{s}|\right)\Gamma\left(\eta+|\widehat{p}|-|\widehat{q}|+1\right)
\Gamma\left(\eta+|\widehat{s}|-|\widehat{t}|+|\widehat{p}|\right)}\\
&=\frac
{\left(\eta+b+1\right)\left(\eta+a+|\widehat{s}|-|\widehat{t}|+1\right)\left(\eta+|\widehat{s}|\right)
\left(\eta+|\widehat{p}|-|\widehat{q}|+1\right)\left(\eta+|\widehat{s}|-|\widehat{t}|+|\widehat{p}|\right)}
{\left(\eta+a+1\right)\left(\eta+b+|\widehat{p}|-|\widehat{q}|+1\right)\left(\eta+|\widehat{p}|\right)
\left(\eta+|\widehat{s}|-|\widehat{t}|+1\right)\left(\eta+|\widehat{p}|-|\widehat{q}|+|\widehat{s}|\right)}.
 \end{align*}
The desired result then follows from \eqref{eq13} and the above identity.
\end{proof}

Of course, one may expect that there should exist some differences in
operator theory on the Bergman spaces between on the weakly pseudoconvex domain $\Omega^n_m$ and on the unit ball $\mathbb{B}^n$.
Note that $|\widehat{p}|$, $|\widehat{q}|$, $|\widehat{s}|$ and $|\widehat{t}|$ will always satisfy condition (a) of Corollary~\ref{cor nec} for the case $m=(1,\cdots,1)$. In many cases, however, this is no longer true. 

In addition, it is interesting to observe that the collection of the tuple $(l,k,|\widehat{p}|,|\widehat{q}|,|\widehat{s}|,|\widehat{t}|)$ satisfying
\eqref{eq14} is finite. Consequently, there exist a finite number of the combination of the tuple $(l,k,|\widehat{p}|,|\widehat{q}|,|\widehat{s}|,|\widehat{t}|)$ such that the operators $T_{r^l\zeta^p\overline{\zeta}^q}$ and $T_{r^k\zeta^s\overline{\zeta}^t}$ commute on $A^2(\Omega_m^n)$. Moreover, with the help of \eqref{eq14}, we can easily obtain the next two corollaries, which give the specific sufficient and necessary condition for some special monomial-type Toeplitz operators to be commutitive on $A^2(\Omega_m^n)$.
The first corollary characterizes commuting monomial Toeplitz operators on the Bergman space of the weakly pseudoconvex domains, which is the same as the case on the unit ball.
\begin{corollary}\label{five}
Let $p,q,s,t\in\mathbb{N}^n$. Then the operators $T_{z^p\overline{z}^q}$ and $T_{z^s\overline{z}^t}$ commute on $A^2(\Omega_m^n)$ if and only if $(|\widehat{p}|,|\widehat{q}|,|\widehat{s}|,|\widehat{t}|)$ and $(p_i,q_i,s_i,t_i)$
       satisfy Condition (I) for all $i\in\{1,2,\cdots,n\}$.
\end{corollary}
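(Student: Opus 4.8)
The plan is to deduce Corollary~\ref{five} directly from Theorem~\ref{thm1} by specializing to the monomial case $l=k=0$. When the symbols are $z^p\overline z^q$ and $z^s\overline z^t$, condition~(c) of Theorem~\ref{thm1} forces $l=2a-\mu=0$ and $k=2b-\nu=0$, i.e.\ $a=\mu/2$ and $b=\nu/2$, where $\mu=|\widehat p|-|\widehat q|$ and $\nu=|\widehat s|-|\widehat t|$. So I would first substitute $a=\mu/2$, $b=\nu/2$ into the identity~\eqref{eq1}. The right-hand side then becomes
$$
\frac{\bigl(\eta+\tfrac{\nu}{2}\bigr)\bigl(\eta+\tfrac{\mu}{2}+\nu\bigr)}
{\bigl(\eta+\tfrac{\mu}{2}\bigr)\bigl(\eta+\tfrac{\nu}{2}+\mu\bigr)},
$$
and the whole of (c) reduces to: $(p_i,q_i,s_i,t_i)$ satisfies Condition (I) for all $i$, and this particular gamma-function identity holds on a right half-plane.

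The core of the argument is then to show that, under the constraint $a=\mu/2$, $b=\nu/2$, the identity~\eqref{eq1} holds if and only if $(|\widehat p|,|\widehat q|,|\widehat s|,|\widehat t|)$ satisfies Condition~(I), i.e.\ one of (i)--(vi) holds with $(x_1,x_2,y_1,y_2)=(|\widehat p|,|\widehat s|,|\widehat q|,|\widehat t|)$. For the ``if'' direction I would simply check the six cases one by one: in each case one verifies that both sides of the specialized~\eqref{eq1} collapse to the same expression (for instance, (v) gives $|\widehat p|=|\widehat s|$ and $|\widehat q|=|\widehat t|$, hence $\mu=\nu$, and both sides are identically $1$; (vi) gives $|\widehat p|=|\widehat q|$ and $|\widehat s|=|\widehat t|$, hence $\mu=\nu=0$, and again both sides are $1$; the degenerate cases (i)--(iv) are equally immediate). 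For the ``only if'' direction I would argue via the poles of the gamma-function ratio on the left of~\eqref{eq1}: the right-hand side is a rational function of $\eta$ with at most two poles, so all but finitely many poles of $\Gamma(\eta+|\widehat p|)$, $\Gamma(\eta+\nu+1)$, $\Gamma(\eta+\mu+|\widehat s|)$ in the numerator must be cancelled by poles of the denominator gammas; comparing pole sequences forces the shift parameters to differ by integers and ultimately pins down the relations among $|\widehat p|,|\widehat q|,|\widehat s|,|\widehat t|$ listed in Condition~(I). This is the same mechanism used in the proof of Lemma~\ref{lem5}, so I would invoke that style of reasoning rather than redo it from scratch.

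Finally, I would package the two implications: by Theorem~\ref{thm1}, $T_{z^p\overline z^q}$ and $T_{z^s\overline z^t}$ commute on $A^2(\Omega_m^n)$ iff (c) holds with $l=k=0$, iff $(p_i,q_i,s_i,t_i)$ satisfies Condition~(I) for every $i$ \emph{and} the specialized identity~\eqref{eq1} holds, iff $(p_i,q_i,s_i,t_i)$ satisfies Condition~(I) for every $i$ and $(|\widehat p|,|\widehat q|,|\widehat s|,|\widehat t|)$ satisfies Condition~(I). That is exactly the claimed statement.

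The main obstacle I anticipate is the ``only if'' direction of the pole-counting step: one must be careful that the specialized right-hand side can, in borderline cases, have its zeros or poles cancel (e.g.\ when $\mu=0$ or $\nu=0$, or when $\mu/2$ coincides with $\nu/2+\mu$), so the case analysis has to be organized so that no configuration of coincidences is overlooked. Handling these degeneracies cleanly — rather than the generic pole-matching, which is routine — is where the real care is needed, and it is essentially a bookkeeping refinement of the argument already appearing in Lemma~\ref{lem5}.
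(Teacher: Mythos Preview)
There is a genuine error in your specialization. The symbol $z^p\overline z^q$ is \emph{not} $r^0\zeta^p\overline\zeta^q$. In $m$-polar coordinates $z_i=r^{1/m_i}\zeta_i$, so
\[
z^p\overline z^q=\prod_{i=1}^n r^{p_i/m_i}\zeta_i^{p_i}\cdot\prod_{i=1}^n r^{q_i/m_i}\overline\zeta_i^{\,q_i}
= r^{|\widehat p|+|\widehat q|}\,\zeta^p\overline\zeta^{\,q}.
\]
Thus the correct specialization is $l=|\widehat p|+|\widehat q|$ and $k=|\widehat s|+|\widehat t|$, giving $a=|\widehat p|$ and $b=|\widehat s|$ (not $a=\mu/2$, $b=\nu/2$). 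Your entire downstream analysis is built on the wrong values of $a,b$, so the rational right-hand side you wrote down and the pole-matching you planned do not correspond to the problem at hand.

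With the correct values $a=|\widehat p|$, $b=|\widehat s|$, the paper avoids pole-counting altogether: it invokes the polynomial identity~\eqref{eq14} of Corollary~\ref{cor nec} (obtained from~\eqref{eq13} by the shift $\eta\mapsto\eta+1$ and division), which after cancellation collapses to the cubic equality
\[
(\eta+|\widehat s|-|\widehat t|+1)(\eta+|\widehat p|+1)(\eta+|\widehat s|+|\widehat p|-|\widehat q|+1)
=(\eta+|\widehat p|-|\widehat q|+1)(\eta+|\widehat s|+1)(\eta+|\widehat p|+|\widehat s|-|\widehat t|+1).
\]
Matching the two sets of linear factors immediately yields Condition~(I) for $(|\widehat p|,|\widehat q|,|\widehat s|,|\widehat t|)$, with no gamma-function bookkeeping and no degenerate-case analysis. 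So once you fix the specialization, the route through~\eqref{eq14} is both shorter and cleaner than the Lemma~\ref{lem5}-style argument you had in mind.
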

\begin{proof}
Denote $l=|\widehat{p}|+|\widehat{q}|$ and $k=|\widehat{s}|+|\widehat{t}|$. Then the identity \eqref{eq14} becomes
\begin{align*}
& \left(\eta+|\widehat{s}|-|\widehat{t}|+1\right)\left(\eta+|\widehat{p}|+1\right)\left(\eta+|\widehat{s}|+|\widehat{p}|-|\widehat{q}|+1\right)\\
&=\left(\eta+|\widehat{p}|-|\widehat{q}|+1\right)\left(\eta+|\widehat{s}|+1\right)
\left(\eta+|\widehat{p}|+|\widehat{s}|-|\widehat{t}|+1\right),
\end{align*}
which implies that $(|\widehat{p}|,|\widehat{q}|,|\widehat{s}|,|\widehat{t}|)$ satisfies Condition (I).
The desired result then follows from Theorem~\ref{thm1}.
\end{proof}
Our next corollary shows that a Toeplitz operator
with a holomorphic monomial symbol may only commute with another monomial-type Toeplitz operator with a holomorphic
symbol.
\begin{corollary}
Let $k\in\mathbb{R}_+$, $p,s,t\in\mathbb{N}^n$ with $p\neq0$. Then the operators $T_{z^p}$ and $T_{r^k\zeta^s\overline{\zeta}^t}$ commute on $A^2(\Omega_m^n)$ if and only if
the operator $T_{r^k\zeta^s\overline{\zeta}^t}$ also has holomorphic symbol.
\end{corollary}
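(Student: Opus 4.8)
The plan is to recast the two sides of the equivalence in the notation of Theorem~\ref{thm1} and then extract everything from the rational identity \eqref{eq14} of Corollary~\ref{cor nec}. First I would record the translations. Since $z_i=r^{1/m_i}\zeta_i$, we have $z^p=r^{|\widehat{p}|}\zeta^p$, so $T_{z^p}$ is the monomial-type Toeplitz operator $T_{r^l\zeta^p\overline{\zeta}^q}$ with $l=|\widehat{p}|$ and $q=0$. On the other side, ``$T_{r^k\zeta^s\overline{\zeta}^t}$ has holomorphic symbol'' means exactly that $t=0$ and $k=|\widehat{s}|$ (equivalently $r^k\zeta^s\overline{\zeta}^t=z^s$); here one uses that $|\widehat{t}|=\sum_i t_i/m_i=0$ forces $t=0$, because $t\in\N^n$ and every $m_i$ is a positive integer.

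The ``if'' direction is then immediate: if $t=0$ and $k=|\widehat{s}|$, the two operators are $T_{z^p}$ and $T_{z^s}$, both with holomorphic monomial symbols, and hence commute --- this is case (c2) of Corollary~\ref{cor five}.

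For the ``only if'' direction, assume $T_{z^p}$ and $T_{r^k\zeta^s\overline{\zeta}^t}$ commute. By Corollary~\ref{cor nec}(b) the identity \eqref{eq14} holds; in the present situation $|\widehat{q}|=0$ and $l=|\widehat{p}|$, so the parameter $a=(l+|\widehat{p}|-|\widehat{q}|)/2$ equals $|\widehat{p}|$, while $b=(k+|\widehat{s}|-|\widehat{t}|)/2$. Substituting $a=|\widehat{p}|$ into \eqref{eq14}, the factor $(\eta+|\widehat{p}|)$ cancels between the two numerators, $(\eta+|\widehat{p}|+1)$ cancels between the two denominators, and $(\eta+|\widehat{p}|+|\widehat{s}|-|\widehat{t}|)$ cancels as well; clearing the surviving denominators turns \eqref{eq14} into an identity of two monic polynomials of degree four in $\eta$, i.e. into the equality of multisets
\begin{equation*}
\bigl\{\,|\widehat{s}|-|\widehat{t}|+1,\ |\widehat{p}|+|\widehat{s}|,\ b,\ b+|\widehat{p}|+1\,\bigr\}=\bigl\{\,|\widehat{s}|,\ b+1,\ |\widehat{p}|+|\widehat{s}|-|\widehat{t}|+1,\ b+|\widehat{p}|\,\bigr\}.
\end{equation*}

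The remaining step --- and the only genuinely delicate point --- is a short case analysis of this multiset equality, using that $|\widehat{p}|>0$ because $p\neq0$. The element $b+|\widehat{p}|+1$ on the left cannot equal $b+1$ or $b+|\widehat{p}|$ on the right (either would force $|\widehat{p}|=0$), so it equals either $|\widehat{p}|+|\widehat{s}|-|\widehat{t}|+1$ or $|\widehat{s}|$. In the first case $b=|\widehat{s}|-|\widehat{t}|$; substituting this back, cancelling the common entry $|\widehat{s}|-|\widehat{t}|+1$, and once more using $|\widehat{p}|>0$ forces $|\widehat{t}|=0$, and then $b=|\widehat{s}|$ gives $k=|\widehat{s}|$, which together with $|\widehat{t}|=0\Rightarrow t=0$ is exactly the conclusion that $T_{r^k\zeta^s\overline{\zeta}^t}$ has holomorphic symbol. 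In the second case $b=|\widehat{s}|-|\widehat{p}|-1$; matching $|\widehat{p}|+|\widehat{s}|$ on the left then forces $|\widehat{t}|=1$, after which the leftover two-element multisets cannot agree (one would need $|\widehat{p}|=0$ or $1=0$), so this case does not occur. I expect the organisation of these two cases to be the main obstacle; everything else is routine manipulation of \eqref{eq14} together with the single structural fact $|\widehat{p}|>0$.
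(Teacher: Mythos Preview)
Your argument is correct. After specialising \eqref{eq14} to $|\widehat{q}|=0$, $a=|\widehat{p}|$ and cancelling the three common linear factors, you indeed obtain the quartic multiset identity you wrote, and your two-case analysis (using only $|\widehat{p}|>0$) is sound; the only minor slip is cosmetic: $b+|\widehat{p}|+1=b+|\widehat{p}|$ forces $1=0$ rather than $|\widehat{p}|=0$, but this is still a contradiction.

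Your route, however, differs from the paper's. The paper does not clear all denominators and match roots; instead it rearranges \eqref{eq14} into the form $R(\eta)=R(\eta+|\widehat{p}|)$ for the rational function
\[
R(\eta)=\frac{(\eta+|\widehat{s}|-|\widehat{t}|+1)(\eta+b)}{(\eta+|\widehat{s}|)(\eta+b+1)},
\]
observes that a bounded analytic function on a right half-plane which is periodic with real period $|\widehat{p}|>0$ must be constant (in fact $\equiv 1$, by its behaviour at infinity), and then solves the resulting \emph{quadratic} identity $(\eta+|\widehat{s}|-|\widehat{t}|+1)(\eta+b)=(\eta+|\widehat{s}|)(\eta+b+1)$ to read off $|\widehat{t}|=0$ and $b=|\widehat{s}|$. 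This periodicity trick absorbs most of the casework and leaves a two-line finish; your approach is more elementary (pure polynomial algebra, no appeal to Liouville-type reasoning) but pays for it with a longer, if routine, multiset chase.
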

\begin{proof}
First assume that $T_{z^p}$ and $T_{r^k\zeta^s\overline{\zeta}^t}$ commute on $A^2(\Omega_m^n)$. Then by the identity \eqref{eq14} we have
\begin{align*}
&\frac{\left(\eta+|\widehat{s}|-|\widehat{t}|+1\right)\left(\eta+\frac{k}{2}+\frac{|\widehat{s}|}{2}-\frac{|\widehat{t}|}{2}\right)
}
{\left(\eta+|\widehat{s}|\right)\left(\eta+\frac{k}{2}+\frac{|\widehat{s}|}{2}-\frac{|\widehat{t}|}{2}+1\right)}\\
&=\frac{\left(\eta+|\widehat{p}|+|\widehat{s}|-|\widehat{t}|+1\right)\left(\eta+\frac{k}{2}+\frac{|\widehat{s}|}{2}-\frac{|\widehat{t}|}{2}+|\widehat{p}|\right)}
{\left(\eta+|\widehat{p}|+|\widehat{s}|\right)
\left(\eta+\frac{k}{2}+\frac{|\widehat{s}|}{2}-\frac{|\widehat{t}|}{2}+|\widehat{p}|+1\right)}.
\end{align*}
Thus the function on the left-hand side of the above identity is a bounded periodic analytic function with a period $|\widehat{p}|$ in the right half-plane,
and consequently, it must be identity function. So we have
$$\left(\eta+|\widehat{s}|-|\widehat{t}|+1\right)\left(\eta+\frac{k}{2}+\frac{|\widehat{s}|}{2}-\frac{|\widehat{t}|}{2}\right)
=\left(\eta+|\widehat{s}|\right)\left(\eta+\frac{k}{2}+\frac{|\widehat{s}|}{2}-\frac{|\widehat{t}|}{2}+1\right),$$
which implies that $|\widehat{t}|=0$ and $k=|\widehat{s}|$, as desired.

By (c2) of Corollary~\ref{cor five}, the converse implication is clear. This completes the proof.
\end{proof}

  \end{document}